\documentclass[11pt]{article}
\usepackage[a4paper,margin=2.65cm]{geometry}
\usepackage{amsmath,amssymb,amsthm,mathtools,mathrsfs}
\usepackage{booktabs,enumitem,microtype,tabularx}
\usepackage[hidelinks]{hyperref}
\numberwithin{equation}{section}

\newtheorem{theorem}{Theorem}[section]
\newtheorem{lemma}[theorem]{Lemma}
\newtheorem{proposition}[theorem]{Proposition}
\newtheorem{corollary}[theorem]{Corollary}
\theoremstyle{definition}
\newtheorem{definition}[theorem]{Definition}
\theoremstyle{remark}
\newtheorem{remark}[theorem]{Remark}

\DeclareMathOperator{\Hol}{Hol}
\newcommand{\D}{\mathbb D}
\newcommand{\T}{\mathbb T}
\newcommand{\Ccal}{\mathscr C}
\newcommand{\Pcal}{\mathcal P}
\newcommand{\Piop}{\boldsymbol\Pi}

\title{Vector-Carleson, Calder\'on, and Poisson--Atomic Criteria for\\
Generalized Hilbert Operators on $H^p$, $p>2$}
\author{Ma Yicen}
\date{}

\begin{document}
\maketitle

\begin{abstract}
Let
\[
 \mathcal H_gf(z)=\int_0^1f(t)g'(tz)\,dt
\]
and let $2<p<\infty$. Put $t=2p/(p-2)$ and
$X_j=2^{-j/p'}\Delta_jg'$, where $\Delta_j$ is the hard dyadic Taylor
projection. We prove that $\mathcal H_g:H^p\to H^p$ is bounded if and
only if
\[
 h\longmapsto(X_jh)_{j\ge0}:H^t\longrightarrow\ell^t(H^2)
\]
is bounded. The square of this embedding norm equals the norm of the
positive column operator $b\mapsto\sum_jb_j|X_j|^2$ from $\ell^{p/2}$
to $L^{p/2}$. Coordinate tails give essential-norm estimates and an exact
compactness criterion; Hardy duality gives an equivalent coanalytic
paraproduct inequality. The criterion is quantitatively invariant under
admissible smooth analytic dyadic resolutions and defines a
resolution-independent analytic Calder\'on space $\mathfrak C_p$ equal to
the Hilbert-range multiplier space. Its little space is simultaneously
the compact-symbol class, the polynomial closure, and the radial-norm
continuity class. The criterion yields a bounded noncompact symbol
with every large dyadic block densely occupied, outside both the lacunary
sector and the previously known blockwise sufficient space.

The full Hardy test ball can, without loss of norm, be replaced by outer
functions whose $t$th powers are finite positive mixtures of Poisson
kernels, provided the constant is uniform over all finite atom counts.
No fixed atom count suffices: even uniformly vanishing normalized-kernel
tests and a vanishing $H^p\to H^2$ mixed-norm tail may coexist with an
unbounded operator. For composite Dirichlet blocks we determine the sharp
atom count needed to capture their multiplier norm. Finally, finite dual
block weights produce intrinsic aggregate probability densities. Their
$L^1$ Poisson-mixture approximation gives an exact atomic-complexity
formula and an explicit finite-bandwidth testing bound.
\end{abstract}

\medskip
\noindent\textbf{2020 Mathematics Subject Classification.}
Primary 30H10; Secondary 47B38, 46E15.

\smallskip
\noindent\textbf{Keywords.}
Generalized Hilbert operator, Hardy space, coefficient multiplier,
vector-valued Carleson embedding, analytic Calder\'on space, Poisson kernel,
atomic testing, essential norm.

\section{Introduction and main results}

For $\phi\in\Hol(\D)$ put
\[
 T_\phi f(z)=\int_0^1f(t)\phi(tz)\,dt,
 \tag{1.1}
\]
so that $T_{g'}=\mathcal H_g$. Guo and Tang identified
$T_\phi f=\phi*\mathcal Hf$ and characterized boundedness by the abstract
coefficient-multiplier space $(\mathcal R_p,H^p)$. They determined that
space for $1<p\le2$, while for $p>2$ they obtained strict upper and lower
inclusions \cite{GuoTang2026}; for broader coefficient-multiplier context
see \cite{BlascoPavlovic2011}. Norrbo, Pel\'aez and Wu characterized
$H^p\to H^2$ boundedness and solved the $H^p\to H^p$ problem for
lacunary symbols \cite{NorrboPelaezWu2026}. See also
\cite{GalanopoulosEtAl2014}.

For the classical Hilbert-matrix operator and its composition-operator
realization see \cite{DiamantopoulosSiskakis2000}; generalized Hilbert
operators on weighted Bergman spaces were developed in
\cite{PelaezRattya2013}, and further Hardy-space context appears in
\cite{Blasco2022}.  Background on coefficient multipliers and mixed-norm
spaces is provided by \cite{JevticVukoticArsenovic2016,Pavlovic2014}.
Vector-valued Carleson embeddings form a much broader subject; for a
nearby Bergman-space setting see \cite{ConstantinGavruta2015}.  None of
these general frameworks is used as a substitute for the explicit dyadic
reduction proved below.

The precise relation between the direct results and the present paper is
summarized below.  The last row is a theorem statement, not a claim of
global priority.

\begin{center}
\small
\begin{tabularx}{\textwidth}{@{}p{0.20\textwidth}XX@{}}
\toprule
Source & General $p>2$ result & Additional scope \\
\midrule
Guo--Tang \cite{GuoTang2026}
& Abstract characterization by $(\mathcal R_p,H^p)$ and strict classical
  upper/lower bounds
& Complete monotone-coefficient sector \\
Norrbo--Pel\'aez--Wu \cite{NorrboPelaezWu2026}
& Exact $H^p\to H^2$ mixed norm
& Complete lacunary-symbol sector and compactness there \\
Present paper
& Exact $H^t\to\ell^t(H^2)$ vector criterion for every analytic symbol
& Resolution-independent Calder\'on space, essential norm, compactness,
  Poisson-atomic testing, and testing complexity \\
\bottomrule
\end{tabularx}
\end{center}

Let $m$ be normalized Haar measure on $\T$, and define
\[
 \Delta_j\phi(z)=\sum_{2^j\le n<2^{j+1}}\widehat\phi(n)z^n,
 \qquad X_j=2^{-j/p'}\Delta_j\phi .
 \tag{1.2}
\]
Throughout,
\[
 2<p<\infty,\quad r=\frac p2,\quad
 t=\frac{2p}{p-2}=2r',\quad
 q=t'=\frac{2p}{p+2},\quad
 \alpha=\frac2t=\frac{p-2}{p}.
 \tag{1.3}
\]

Our principal theorem is the following explicit criterion.

\begin{theorem}[Vector-Carleson characterization]\label{thm:main}
The following are equivalent:
\begin{enumerate}[label=\textup{(\roman*)}]
 \item $T_\phi:H^p\to H^p$ is bounded;
 \item
 \[
  \Ccal_{\phi,p}h=(X_jh)_{j\ge0}:H^t\longrightarrow\ell^t(H^2)
  \tag{1.4}
 \]
 is bounded;
 \item every $h\in H^t$ satisfies
 \[
  \left(\sum_{j\ge0}
  [2^{-j/p'}\|h\Delta_j\phi\|_2]^t\right)^{1/t}
  \lesssim\|h\|_t.
  \tag{1.5}
 \]
\end{enumerate}
Moreover,
\[
 \|T_\phi\|\asymp_p|\phi(0)|+\|\Ccal_{\phi,p}\|,
 \qquad
 \|T_\phi\|_{\rm e}\asymp_p
 \lim_{N\to\infty}\|\Ccal_{\phi,p}^{>N}\|,
 \tag{1.6}\label{eq:main-norm-tail}
\]
where $\Ccal_{\phi,p}^{>N}h=(X_jh)_{j>N}$. In particular,
\[
 T_\phi\text{ is compact}
 \Longleftrightarrow
 \|\Ccal_{\phi,p}^{>N}\|\longrightarrow0.
 \tag{1.7}\label{eq:main-compact}
\]
It suffices in \textup{(iii)} to test zero-free outer functions $h$ for
which $1/h\in H^\infty$.
\end{theorem}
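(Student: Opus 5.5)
The plan is to reduce $T_\phi$ to a family of dyadic multipliers, using the Guo--Tang identity $T_\phi f=\phi*\mathcal Hf$. For $n\ge 0$ the Taylor coefficients of $T_\phi f$ are $\widehat\phi(n)\lambda_n(f)$, where $\lambda_n(f)=\int_0^1t^nf(t)\,dt$. For $n\in[2^j,2^{j+1})$ the weight $t^n$ is essentially supported on $[1-2^{-j},1]$, so heuristically
\[
 T_\phi f\approx\phi(0)\lambda_0(f)+\sum_{j\ge0}c_j(f)\,2^{-j}\Delta_j\phi,
 \qquad c_j(f)\approx f(1-2^{-j}).
\]
Littlewood--Paley theory for $p>2$ gives $\|F\|_p\asymp_p|\widehat F(0)|+\|(\sum_j|\Delta_jF|^2)^{1/2}\|_p$. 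Writing $b_j=2^{-j/p}c_j$, the heuristic becomes
\[
 \|T_\phi f\|_p\approx|\phi(0)\lambda_0(f)|+\Big\|\Big(\sum_j|b_j|^2|X_j|^2\Big)^{1/2}\Big\|_p,
 \qquad X_j=2^{-j/p'}\Delta_j\phi .
\]
The sequence $(b_j)$ ranges over (essentially) the unit ball of $\ell^p$. Indeed, $\sum_j2^{-j}|f(1-2^{-j})|^p\lesssim\int_0^1|f(r)|^p\,dr\lesssim\|f\|_p^p$ by Fej\'er--Riesz, and conversely every $\ell^p$ sequence is realized by sums of normalized kernels at the points $1-2^{-j}$. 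Granting this, (i) is equivalent to boundedness of the column operator $b\mapsto\sum_jb_j|X_j|^2$ from $\ell^{r}$ to $L^{r}$ (on nonnegative $b$, after squaring, with $r=p/2$).

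Next I convert this column bound into (ii) and (iii) by duality. The dual of $L^r$ is $L^{r'}$, and $r'=t/2$. Every $0\le w\in L^{r'}$ factors as $w=|h|^2$ with $h$ outer and $\|h\|_t^2=\|w\|_{r'}$. Pairing gives $\int\sum_j\beta_j|X_j|^2w\,dm=\sum_j\beta_j\|X_jh\|_2^2$. Taking the supremum over the unit ball of $\ell^r$ in $\beta$ yields
\[
 \big\|(\|X_jh\|_2^2)_j\big\|_{\ell^{r'}}=\big\|(\|X_jh\|_2)_j\big\|_{\ell^t}^2 .
\]
Hence the squared column norm equals $\|\Ccal_{\phi,p}\|^2$ exactly, and (ii) is equivalent to (iii). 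For general $h\in H^t$, the same argument applied to the outer part of $|h|$ handles the inner factor, since the inner factor does not change $|h|$. By density of bounded-above and bounded-below weights in $L^{r'}_+$ and monotone convergence, it suffices to test outer $h$ with $h,1/h\in H^\infty$.

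The main obstacle is making the approximation $c_j\approx f(1-2^{-j})$ rigorous in both directions, because $\lambda_n(f)$ varies inside each block. For the upper bound I would treat $(\lambda_n)_{n\in I_j}$ as a block multiplier. I would apply a vector-valued Marcinkiewicz (Stein) multiplier theorem on $\ell^2$-valued $L^p$. The constant on block $j$ is $\sup_{I_j}|\lambda_n|$ plus the variation of $\lambda_n$ over $I_j$, and both are bounded by $2^{-j}$ times a local radial maximal average of $|f|$ near $1-2^{-j}$. The resulting $b_j$ still lie in $\ell^p$ with norm $\lesssim\|f\|_p$, by Hardy's inequality and the Fej\'er--Riesz bound.

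For the lower bound I would test
\[
 f_b=\sum_jb_j2^{j/p}\,\big(1-(1-2^{-j})z\big)^{-1}2^{-j}
\]
with $b\ge0$ and $b$ suitably separated (split $j$ into residue classes mod a large $M$). On block $I_k$ the coefficients $\lambda_n(f_b)$ are then positive and comparable to $2^{-k}2^{k/p}b_k$, up to geometrically decaying contributions from $j\ne k$. This is where the positivity of the kernels and the restriction to a sparse subsequence of scales are essential. Finally, the $\phi(0)$ term comes from $n=0$ and contributes the summand $|\phi(0)|$ in \eqref{eq:main-norm-tail}.

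For the essential norm, I would run the same two-sided argument on tails. For the upper bound, the polynomial truncation $\sum_{j\le N}\Delta_j\phi$ gives a compact $T$, since $\mathcal H$ maps into $H^p$ and a Hadamard product with a polynomial has finite rank. The remainder is controlled by $\|\Ccal^{>N}_{\phi,p}\|$. For the lower bound, the normalized test functions $f_b$ with $b$ supported on $j>N$ tend to zero weakly in $H^p$ as $N\to\infty$. Compact operators map them to norm-null sequences, so $\|T_\phi\|_{\rm e}\gtrsim\lim_N\|\Ccal^{>N}_{\phi,p}\|$. Statement \eqref{eq:main-compact} then follows immediately.
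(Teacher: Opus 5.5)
You have the paper's architecture: a dyadic synthesis reduction, with a block-variation multiplier applied to $\sum_jA_j\Delta_j\phi$ for the upper bound and positive test functions for the lower bound. You then pass via Littlewood--Paley to the positive column $b\mapsto\sum_jb_j|X_j|^2$, use outer-function duality to get $\|\Ccal_{\phi,p}\|^2=\|\Pcal_{\phi,p}\|$, and finish with a tail argument. The lower bound, however, does not close as you describe it.

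On $I_k$ the moment of the scale-$j$ kernel $g_j=2^{-j/p'}(1-\rho_jz)^{-1}$ has size $2^{-k/p'}\epsilon_{|j-k|}$, with $\epsilon$ geometrically decaying. Hence $\lambda_n(f_b)=2^{-k/p'}\bigl(c_nb_k+\sum_{j\ne k}O(\epsilon_{|j-k|})b_j\bigr)$. Since $b_j/b_k$ is unbounded, $\lambda_n(f_b)$ is not comparable to $2^{-k/p'}b_k$. Restricting to a residue class mod $M$ shrinks $\epsilon$ but not the ratio $b_j/b_k$. Nothing in your sketch turns this additive error into a norm inequality. A perturbative version would have to absorb a term $\delta\|\mathscr S_{\phi,p}\|\,\|b\|_{\ell^p}$. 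That needs an a priori finite synthesis norm, obtainable by truncation since $T_{S_N\phi}=S_NT_\phi$, but you do not supply it. It would also contaminate your essential-norm lower bound with the full norm.

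The missing idea, which is the paper's, is that comparability is not needed. You only need a lower bound and bounded block variation for the ratio $\eta_n=b_k2^{-k/p'}/\lambda_n(f_b)$, $n\in I_k$.
\begin{itemize}
\item Because $f_b>0$ on $[0,1)$, $n\mapsto\lambda_n(f_b)$ is decreasing. So $\eta_n$ is increasing on each block, and its variation there is at most $\sup\eta$.
\item $\sup\eta\le C_p$ by positivity alone, since $\lambda_n(f_b)\ge b_k\lambda_n(g_k)\gtrsim_p2^{-k/p'}b_k$.
\item Lemma~\ref{lem:dyadic-marc} then gives $\|\sum_kb_kX_k\|_p\lesssim_p\|T_\phi f_b\|_p$ for every finite $b\ge0$, with no separation of scales. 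Complex $b$ follows by splitting into four nonnegative parts.
\end{itemize}
Your Cauchy kernels work here; they satisfy the synthesis bound by the dyadic-arc computation of Lemma~\ref{lem:packets}. With this clean lower bound your weak-null argument for the essential norm is valid. The paper instead uses $(I-Q_N)T_\phi=T_{\phi^{>N}}$ together with $\|(I-Q_N)K\|\to0$ for compact $K$.

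Two smaller corrections:
\begin{itemize}
\item Your heuristic inequality $\sum_j2^{-j}|f(1-2^{-j})|^p\lesssim\int_0^1|f(r)|^p\,dr$ is false; take $f(z)=e^{-M(z-x)^2}$ with $M\to\infty$. Only the Carleson bound by $\|f\|_p^p$ holds. The rigorous envelope is $A_j(f)=\int_0^1t^{2^j}|f(t)|\,dt$, which is not local near $1-2^{-j}$. It is controlled in the weighted $\ell^p$ norm by the convolution bound of Lemma~\ref{lem:moment}.
\item No vector-valued multiplier theorem is needed. The ratio $\lambda_n/A_j$, patched across blocks, is a single scalar multiplier satisfying \eqref{eq:marc-condition}.
\end{itemize}
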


The hard-block formula in Theorem~\ref{thm:main} is not tied to that
particular cutoff. Theorem~\ref{thm:resolution} proves quantitative
invariance under admissible smooth analytic resolutions, and
Theorem~\ref{thm:calderon} packages the common norm as an intrinsic
analytic Calder\'on space $\mathfrak C_p=\mathfrak M_p$, including its
little space, compactness, and quotient essential norm.

The constant test $h=1$ is the dyadic form of the $H^p\to H^2$
mixed-norm condition. Thus Theorem~\ref{thm:main} identifies exactly the
additional simultaneous angular testing needed for $H^p$ output. Its
Hardy dual is the paraproduct synthesis map
\[
 \Piop_{\phi,p}(u_j)=P_+\left(\sum_j\overline{X_j}u_j\right):
 \ell^q(H^2)\longrightarrow H^q.
 \tag{1.8}\label{eq:paraproduct-intro}
\]
Theorem~\ref{thm:dense-separation} below also gives a dense-frequency
bounded symbol which lies outside the known sufficient condition
$(\|X_j\|_p)_j\in\ell^t$.

There is also a finite, though unbounded-complexity, testing description.
For $a\in\D$ let
\[
 P_a(\zeta)=\frac{1-|a|^2}{|1-\overline a\zeta|^2}.
\]
Denote by $\mathcal A_K$ the outer $H^t$ functions satisfying
\[
 |h|^t=\sum_{\nu=1}^Kc_\nu P_{a_\nu},\qquad
 c_\nu\ge0,\qquad\sum_{\nu=1}^Kc_\nu=1.
 \tag{1.9}
\]

\begin{theorem}[Exact finite Poisson-atomic testing]\label{thm:atomic}
With equality in $[0,\infty]$,
\[
 \|\Ccal_{\phi,p}\|
 =\sup_{K\ge1}\sup_{h\in\mathcal A_K}
 \left(\sum_j\|X_jh\|_2^t\right)^{1/t}.
 \tag{1.10}\label{eq:atomic-exact}
\]
The same equality holds for every coordinate tail. Consequently,
\eqref{eq:atomic-exact} characterizes boundedness, its tail characterizes compactness,
and the limiting tail is comparable to $\|T_\phi\|_{\rm e}$.
\end{theorem}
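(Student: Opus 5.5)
Since $\mathcal A_K\subset\{h\in H^t:\|h\|_t=1\}$ (the Poisson kernels have mean one and the weights sum to one, so $\|h\|_t^t=\sum_\nu c_\nu=1$), the right side of \eqref{eq:atomic-exact} is at most $\|\Ccal_{\phi,p}\|$. The proof is therefore a density argument for the reverse inequality. By the last sentence of Theorem~\ref{thm:main} (or directly by homogeneity and the outer factorization $h=IF$ with $|F|=|h|$ on $\T$, which gives $\|X_jh\|_2=\|X_jF\|_2$), it suffices to test outer $h$ with $\|h\|_t=1$. Such an $h$ is determined up to a unimodular constant by the probability density $w=|h|^t$ on $\T$. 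So the quantity to be maximized is
\[
 \Phi(w)=\Bigl(\sum_j\Bigl(\int_\T|X_j|^2\,w^{2/t}\,dm\Bigr)^{t/2}\Bigr)^{1/t}.
\]
This depends only on $w$, with $\|\Ccal_{\phi,p}\|=\sup\{\Phi(w):w\ge0,\ \int w\,dm=1,\ \log w\in L^1\}$.

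Next, reduce to nice densities. Since $\Phi$ is a supremum over finite sums of the nonnegative terms, I would first truncate to $j\le N$ (where $\Phi_N$ denotes $\Phi$ with the sum restricted to $j\le N$) and show that $\Phi_N$ is lower semicontinuous, indeed continuous, in $w$ for $L^1$ convergence. For $j\le N$ each $X_j$ is a polynomial, hence bounded, and $w\mapsto w^{\alpha}$ with $\alpha=2/t<1$ is continuous from $L^1$ to $L^{1/\alpha}$ because $|a^\alpha-b^\alpha|\le|a-b|^\alpha$. Thus $\int|X_j|^2w_n^\alpha\,dm\to\int|X_j|^2w^\alpha\,dm$ whenever $w_n\to w$ in $L^1$. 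It then remains to approximate an arbitrary probability density $w$ in $L^1$ by finite Poisson mixtures $\sum_{\nu=1}^K c_\nu P_{a_\nu}$. Each such mixture is strictly positive and smooth, so $\log w_n\in L^1$ automatically and the outer function with modulus $w_n^{1/t}$ belongs to $\mathcal A_K$.

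The approximation runs in two steps. First, $P_r[w]=w*P_r\to w$ in $L^1$ as $r\to1$. Second, for fixed $r<1$ the function $P_r[w](\zeta)=\int P_{r\xi}(\zeta)\,w(\xi)\,dm(\xi)$ is an integral of Poisson kernels against the probability measure $w\,dm$. Riemann sums over a fine partition of $\T$ give convex combinations $\sum_\nu c_\nu P_{r\xi_\nu}$ with $c_\nu=\int_{I_\nu}w\,dm$. Because $\xi\mapsto P_{r\xi}$ is uniformly continuous into $C(\T)$ for fixed $r<1$, these sums converge uniformly, and hence in $L^1$. The weights sum exactly to one, so the normalization in (1.9) is preserved.

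Combining the steps, for each $N$ and $\varepsilon>0$ there is a mixture $h\in\bigcup_K\mathcal A_K$ with $\Phi_N(|h|^t)\ge\Phi_N(w)-\varepsilon$. Taking suprema gives equality in $[0,\infty]$, including the value $\infty$. For a coordinate tail, the same argument applies verbatim with $j$ ranging over $N<j\le M$, since the truncation is again a finite set of polynomials. The consequences then follow from Theorem~\ref{thm:main}: (1.6) and (1.7) transfer boundedness, compactness and the essential-norm comparison to the atomic suprema.

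The main obstacle I expect is justifying the reduction to outer functions with the exact norm, i.e.\ that the supremum over the full ball equals the supremum over outer $h$ with $\log w\in L^1$. This holds via inner–outer factorization, because $\|X_jIF\|_2=\|X_jF\|_2$, so no constant is lost. The continuity step needs only finitely many $j$, which is why truncating before approximating is essential; without it the lack of uniform bounds on the $X_j$ would break the argument.
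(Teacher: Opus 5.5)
Your proposal is correct and follows the paper's proof essentially step for step. The matching steps are: the trivial inequality from $\mathcal A_K$ lying in the unit sphere of $H^t$; passage to the outer factor; approximation of $W=|h|^t$ by finite convex Poisson mixtures via $P_\rho*W$ and Riemann sums of the $C(\T)$-valued integral (the paper's Lemma~\ref{lem:density}); and coordinatewise continuity from $|x^\alpha-y^\alpha|\le|x-y|^\alpha$, applied first to finitely many coordinates and then to larger sets. The only cosmetic difference is that you use boundedness of the polynomials $X_j$, whereas the paper pairs $|X_j|^2\in L^{p/2}$ with $L^{p/(p-2)}$ convergence of $|h_n|^2$.
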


The supremum over all finite $K$ cannot be replaced by a fixed level.
For every $K_0<\infty$ there is a holomorphic $\phi$ for which all
$\mathcal A_{K_0}$ tests have a uniform vanishing tail, and the
$H^p\to H^2$ mixed-norm tail vanishes, yet $T_\phi$ is unbounded. The
case $K_0=1$ is a strong failure of the normalized reproducing-kernel
thesis.

This obstruction is quantitative. For
\[
 D_M(z)=1+\cdots+z^{M-1},\qquad
 X(z)=\frac d{\sqrt M}z^LD_M(z^N),
 \tag{1.11}
\]
we prove
\[
 \sup_{h\in\mathcal A_K}\|Xh\|_2
 \asymp_p d\left[1+M^{1/t}
 \min\{1,(K/N)^{1/p}\}\right].
 \tag{1.12}
\]
Hence a fixed proportion of the full multiplier norm
$\|M_X:H^t\to H^2\|\asymp_pdM^{1/t}$ is captured, as $M\to\infty$,
exactly at the scale $K\asymp N$.

Finally, for a finite nonnegative $\ell^r$-unit vector $\lambda$, put
\[
 S_\lambda=\sum_j\lambda_j|X_j|^2,\qquad
 A_\lambda=\|S_\lambda\|_r,\qquad
 W_\lambda=\frac{S_\lambda^r}{A_\lambda^r}.
 \tag{1.13}\label{eq:aggregate-def-intro}
\]
The $W_\lambda$ are probability densities. The exact $K$-atomic norm is
the supremum of $A_\lambda$ times the maximal affinity of $W_\lambda$
with a $K$-Poisson mixture; see Theorem~\ref{thm:aggregate}. Thus the
testing complexity is precisely Poisson-mixture approximation complexity
of near-norming aggregate densities. If a finite block family has angular
bandwidth at most $D$, this gives
\[
 1-\frac{\Ccal_K^2}{\Ccal_\infty^2}
 \lesssim_p
 \min\left\{1,
 \left[\sqrt{\frac{D+1}{K}}
 \log(eK(D+1))\right]^\alpha\right\}.
 \tag{1.14}\label{eq:bandwidth-intro}
\]

The exact formulas below were not found in the two direct 2026 papers or
in targeted searches; the global priority search is not exhaustive. We
therefore establish the explicit vector-Carleson and Poisson-atomic
characterizations below, while making no claim of global priority, of
priority for the classical ingredients, or of an identification with an
already named scalar smoothness space.

\section{Hardy and dyadic preliminaries}

We use standard Hardy-space facts from \cite{Duren1970,Garnett2007}.  The
coefficient multiplier needed in the proof is recorded explicitly.

\begin{lemma}[Dyadic Marcinkiewicz multiplier]\label{lem:dyadic-marc}
Let $1<u<\infty$ and let $\lambda=(\lambda_n)_{n\ge0}$ be a complex
sequence such that
\[
 \mathfrak M(\lambda)
 :=\sup_{n\ge0}|\lambda_n|
 +\sup_{j\ge0}
 \sum_{k=2^j}^{2^{j+1}-1}|\lambda_{k+1}-\lambda_k|<\infty.
 \tag{2.1}\label{eq:marc-condition}
\]
Then, for every $f(z)=\sum_{n\ge0}a_nz^n\in H^u$,
\[
 M_\lambda f(z):=\sum_{n\ge0}\lambda_na_nz^n\in H^u,
 \qquad
 \|M_\lambda f\|_{H^u}\le C_u\mathfrak M(\lambda)\|f\|_{H^u}.
 \tag{2.2}\label{eq:marc-conclusion}
\]
\end{lemma}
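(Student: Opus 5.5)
This is the classical Marcinkiewicz multiplier theorem, transferred from the circle to $H^u$. The plan is to derive it from the Littlewood--Paley square-function characterization of $H^u$ together with a summation-by-parts argument inside each dyadic block.

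\emph{Step 1: reduction to boundary values.} For $f\in H^u$ with $1<u<\infty$, we have $\|f\|_{H^u}=\|f^*\|_{L^u(\T)}$, and the Riesz projection is bounded on $L^u$. It therefore suffices to bound $M_\lambda$ on analytic trigonometric polynomials in $L^u(\T)$ and then pass to general $f$ by density: $M_\lambda$ acts coefficientwise, and coefficients depend continuously on $f$.

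\emph{Step 2: Littlewood--Paley.} Write $\Delta_{-1}f=a_0$ and $\Delta_jf$ for the dyadic blocks as in (1.2). We use the two-sided estimate
\[
\|f\|_u\asymp_u\Bigl\|\Bigl(\sum_{j\ge -1}|\Delta_jf|^2\Bigr)^{1/2}\Bigr\|_u ,
\]
valid for $1<u<\infty$ on $\T$. Since $M_\lambda$ commutes with every $\Delta_j$, it suffices to show
\[
\Bigl\|\Bigl(\sum_j|\Delta_jM_\lambda f|^2\Bigr)^{1/2}\Bigr\|_u\lesssim\mathfrak M(\lambda)\Bigl\|\Bigl(\sum_j|\Delta_jf|^2\Bigr)^{1/2}\Bigr\|_u .
\]

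\emph{Step 3: summation by parts in each block.} Let $S_{j,k}f=\sum_{2^j\le n\le k}a_nz^n$ be the partial block sums. Abel summation on the block $[2^j,2^{j+1})$ gives
\[
\Delta_jM_\lambda f=\lambda_{2^{j+1}-1}\Delta_jf-\sum_{k=2^j}^{2^{j+1}-2}(\lambda_{k+1}-\lambda_k)\,S_{j,k}f .
\]
Apply Cauchy--Schwarz to the sum, weighting by $|\lambda_{k+1}-\lambda_k|$. The total weight on block $j$ is at most $\mathfrak M(\lambda)$, so
\[
|\Delta_jM_\lambda f|^2\lesssim\mathfrak M(\lambda)^2|\Delta_jf|^2+\mathfrak M(\lambda)\sum_k|\lambda_{k+1}-\lambda_k|\,|S_{j,k}f|^2 .
\]

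\emph{Step 4: the main obstacle.} The difficulty is controlling the second term uniformly in the family of partial-sum operators. Each $S_{j,k}$ is a frequency interval projection, hence a combination of modulated Riesz projections. We therefore invoke the vector-valued Riesz projection inequality (M.~Riesz with an $\ell^2$-valued extension, i.e.\ the Marcinkiewicz--Zygmund theorem):
\[
\Bigl\|\Bigl(\sum_i|S_{I_i}g_i|^2\Bigr)^{1/2}\Bigr\|_u\lesssim_u\Bigl\|\Bigl(\sum_i|g_i|^2\Bigr)^{1/2}\Bigr\|_u
\]
for arbitrary intervals $I_i$.

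Index the pairs $(j,k)$, put $g_{j,k}=|\lambda_{k+1}-\lambda_k|^{1/2}\Delta_jf$, and note $S_{j,k}f=S_{j,k}\Delta_jf$. This yields
\[
\Bigl\|\Bigl(\sum_{j,k}|\lambda_{k+1}-\lambda_k|\,|S_{j,k}f|^2\Bigr)^{1/2}\Bigr\|_u\lesssim\Bigl\|\Bigl(\sum_j\mathfrak M(\lambda)|\Delta_jf|^2\Bigr)^{1/2}\Bigr\|_u .
\]

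\emph{Step 5: conclusion.} Combining this with Step 3 gives the square-function bound $\lesssim\mathfrak M(\lambda)\|f\|_u$. The $j=-1$ term contributes only $|\lambda_0a_0|\le\mathfrak M(\lambda)\|f\|_u$. Applying Step 2 then yields (2.2).

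Alternatively, one could simply cite the Marcinkiewicz multiplier theorem on $\T$, applied to the sequence extended by $\lambda_n=0$ for $n<0$, composed with the Riesz projection. The jump of this extension at $n=0$ is bounded by $\sup|\lambda_n|$, so the hypothesis of that theorem holds with constant comparable to $\mathfrak M(\lambda)$. The argument above is essentially its proof.
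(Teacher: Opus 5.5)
Your argument is correct. It is the standard proof of the Marcinkiewicz multiplier theorem via Littlewood--Paley, Abel summation in each block, and the $\ell^2$-valued Riesz projection with modulations. The paper gives no proof of its own: it identifies the lemma with Guo--Tang's Lemma~2.2, resting on Zygmund's Theorem~4.14, which is the result you reconstruct. Your Step~3 also shows that the cross-block jump the paper builds into $\mathfrak M(\lambda)$ is never used; only $\sup_n|\lambda_n|$ and the within-block variation enter.
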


The sum in \eqref{eq:marc-condition} includes the jump from the last
integer in the $j$th block to the first integer in the next block.  This is
exactly \cite[Lemma~2.2]{GuoTang2026}, which in turn cites
\cite[Vol.~II, p.~232, Theorem~4.14]{Zygmund2002}.

We also use the analytic Littlewood--Paley equivalence
\[
 \left\|\sum_jF_j\right\|_u
 \asymp_u\left\|\left(\sum_j|F_j|^2\right)^{1/2}\right\|_u
 \tag{2.3}
\]
for functions supported in successive hard dyadic blocks; see
\cite{Zygmund2002,Grafakos2014}.

\begin{lemma}[Radial moment envelope]\label{lem:moment}
For $1<u<\infty$ and $f\in H^u$, put
\[
 A_j(f)=\int_0^1t^{2^j}|f(t)|\,dt,
 \qquad w_j(f)=2^{j/u'}A_j(f).
\]
Then $\|(w_j(f))\|_{\ell^u}\lesssim_u\|f\|_{H^u}$.
\end{lemma}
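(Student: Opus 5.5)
The plan is a duality argument: bound the $\ell^u$ norm of $(w_j(f))$ by testing against a nonnegative sequence $(b_j)\in\ell^{u'}$ with $\|b\|_{u'}\le1$. Writing
\[
 \sum_j b_j w_j(f)=\int_0^1|f(s)|\,K_b(s)\,ds,
 \qquad K_b(s)=\sum_j b_j2^{j/u'}s^{2^j},
\]
the problem becomes a Hardy-type inequality for the radial restriction of $f$. Two ingredients will be combined.

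\emph{Radial growth of $f$.} I will use the Hardy--Littlewood radial maximal bound $\int_0^1 M(s)^u\,ds\lesssim\|f\|_u^u$, where $M(s)=\sup_{s\le\rho<1}|f(\rho)|$. This is classical: it follows from Fejér--Riesz type arguments, or from the Hardy--Littlewood theorem that $\int_0^1|f(s)|^u(1-s)^{0}\,ds$ is controlled by $\|f\|_{H^u}^u$ through the nontangential maximal function. I would in fact only use the simpler inequality $\int_0^1|f(s)|^u\,ds\lesssim\|f\|_u^u$, which is Fejér--Riesz.

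\emph{Dyadic estimate of the moments.} Split $[0,1)$ into the dyadic shells $I_k=[1-2^{-k},1-2^{-k-1})$. On $I_k$ we have $s^{2^j}\le e^{-2^{j-k-1}}$, so
\[
 A_j(f)\lesssim\sum_k e^{-c2^{j-k}}\int_{I_k}|f|\,ds
 \le\sum_k e^{-c2^{j-k}}2^{-k/u'}F_k,
 \qquad F_k=\Bigl(\int_{I_k}|f|^u\,ds\Bigr)^{1/u},
\]
by Hölder on an interval of length about $2^{-k}$. Hence
\[
 w_j(f)\lesssim\sum_k 2^{(j-k)/u'}e^{-c2^{j-k}}F_k .
\]
For $k\ge j$ the factor $2^{(j-k)/u'}$ decays geometrically. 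For $k<j$ the factor $e^{-c2^{j-k}}$ beats $2^{(j-k)/u'}$. The kernel $\kappa(m)=2^{m/u'}e^{-c2^m}$ is therefore summable over $m\in\mathbb Z$, and Young's inequality on $\mathbb Z$ gives
\[
 \|(w_j)\|_{\ell^u}\lesssim\|(F_k)\|_{\ell^u}
 =\Bigl(\int_0^1|f|^u\,ds\Bigr)^{1/u}.
\]
Note that the duality step is not actually needed: this convolution argument proves the bound directly.

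\emph{Main step.} What remains is the Fejér--Riesz inequality
\[
 \int_{-1}^1|f(x)|^u\,dx\le\tfrac12\int_{\T}|f|^u\,|d\zeta| ,
\]
valid for $f\in H^u$, which gives $\int_0^1|f|^u\lesssim\|f\|_u^u$. This is the only genuinely Hardy-space input, and I expect it to be the main obstacle.

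To prove it, I would write $f=BF$ with $B$ a Blaschke product and $F$ zero-free, and use that $F^{u/2}\in H^2$. The $H^2$ case then follows either from Hilbert's inequality $\sum a_n\bar a_m/(n+m+1)\le\pi\sum|a_n|^2$, or from Fejér--Riesz proper. Since $|f|\le|F|$ on the radius, this suffices. Alternatively, I will simply cite it from \cite{Duren1970}.
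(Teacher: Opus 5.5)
Your argument is correct and is essentially the paper's proof: the same dyadic shells $[1-2^{-k},1-2^{-k-1})$, H\"older on each shell giving the kernel $2^{(j-k)/u'}e^{-c2^{j-k}}$ (the paper simply bounds the exponential by $1$ when $k>j$), Young's inequality on $\mathbb Z$, and the Fej\'er--Riesz radial restriction inequality. You should delete the aside about $M(s)=\sup_{s\le\rho<1}|f(\rho)|$, because that quantity is infinite for unbounded $f\in H^u$ such as $(1-z)^{-a}$ with $0<a<1/u$; your actual proof never uses it.
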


\begin{proof}
Partition $[0,1)$ into $J_0=[0,1/2)$ and
$J_k=[1-2^{-k},1-2^{-(k+1)})$, $k\ge1$, and put
$v_k=(\int_{J_k}|f|^u)^{1/u}$. The Fej\'er--Riesz radial restriction
inequality gives $\|v\|_{\ell^u}\lesssim_u\|f\|_{H^u}$. H\"older's
inequality on $J_k$ yields
\[
 w_j(f)\lesssim_u
 \sum_{k\le j}2^{(j-k)/u'}e^{-c2^{j-k}}v_k
 +\sum_{k>j}2^{-(k-j)/u'}v_k.
\]
Both displayed kernels are in $\ell^1$, and Young's inequality finishes
the proof.
\end{proof}

\begin{lemma}[Positive Hardy packets]\label{lem:packets}
For $N_j=2^j$, $\rho_j=N_j/(N_j+1)$, let
\[
 F_j(z)=\left(\frac{1-\rho_j^2}{(1-\rho_jz)^2}\right)^{1/u}.
 \tag{2.4}\label{eq:packet-def}
\]
Then $F_j(t)>0$ for $0\le t<1$, $\|F_j\|_{H^u}=1$, and
\[
 \left\|\sum_jv_jF_j\right\|_{H^u}\lesssim_u\|v\|_{\ell^u}.
 \tag{2.5}\label{eq:packet-synthesis}
\]
Moreover, if $2^j\le n<2^{j+1}$, then
\[
 \int_0^1t^nF_j(t)\,dt\gtrsim_u2^{-j/u'}.
 \tag{2.6}\label{eq:packet-moment}
\]
\end{lemma}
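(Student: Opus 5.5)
The plan is to verify the three claims directly from the closed form of $F_j$: positivity and normalization first, then the moment bound (2.6), then the synthesis estimate (2.5).

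\textbf{Positivity and normalization.} For $0\le t<1$ we have $1-\rho_jt>0$, so with the principal branch $F_j(t)>0$. On $\T$,
\[
 |F_j|^u=\frac{1-\rho_j^2}{|1-\rho_j\zeta|^2}=P_{\rho_j}(\zeta),
\]
and $\int P_{\rho_j}\,dm=1$. Hence $\|F_j\|_{H^u}=1$.

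\textbf{Moment bound (2.6).} Take $2^j\le n<2^{j+1}$ and note that $1-\rho_j=1/(N_j+1)\asymp2^{-j}$. On the interval $t\in[1-2^{-j},1)$ we have:
\begin{itemize}
 \item $t^n\ge(1-2^{-j})^{2^{j+1}}\gtrsim1$;
 \item $1-\rho_jt\le(1-\rho_j)+(1-t)\lesssim2^{-j}$, so $F_j(t)\gtrsim(2^{-j}/2^{-2j})^{1/u}=2^{j/u}$.
\end{itemize}
Integrating over this interval, which has length $2^{-j}$, gives
\[
 \int_0^1t^nF_j(t)\,dt\gtrsim2^{-j}2^{j/u}=2^{-j/u'}.
\]

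\textbf{Synthesis estimate (2.5).} I would argue by duality against $H^{u'}$, using the Cauchy pairing. Write $F_j=c_j(1-\rho_jz)^{-2/u}$ with $c_j=(1-\rho_j^2)^{1/u}\asymp2^{-j/u}$. For $g\in H^{u'}$, expanding the binomial series gives
\[
 \langle g,F_j\rangle=c_j\sum_n\binom{n+2/u-1}{n}\rho_j^n\widehat g(n).
\]
This is a smoothed average of the Taylor coefficients of $g$ at scale $2^j$. I would bound it by an averaged maximal quantity and then prove the square-summed estimate
\[
 \Bigl(\sum_j|\langle g,F_j\rangle|^{u'}\Bigr)^{1/u'}\lesssim\|g\|_{u'},
\]
which yields (2.5) by duality. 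If the kernel $\overline{F_j}$ can be written as $c_j$ times a fractional integral of $g$ evaluated at $\rho_j$, the needed bound reduces to a sequence estimate of the type in Lemma~\ref{lem:moment}.

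A cleaner, direct route avoids duality. Since the radii $\rho_j$ are dyadically separated, $|F_j|^u=P_{\rho_j}$ concentrates near $\zeta=1$ at scale $2^{-j}$. Pointwise on $\T$ with $|\zeta-1|\asymp2^{-k}$,
\[
 |F_j(\zeta)|\asymp2^{j/u}\min\{1,2^{-2(j-k)/u}\}\quad\text{for }j\ge k,\qquad |F_j(\zeta)|\asymp2^{-j/u}2^{2k/u}\quad\text{for }j<k
\]
(using $|1-\rho_j\zeta|\asymp\max\{2^{-j},2^{-k}\}$). Thus, on the annular arc $E_k=\{|\zeta-1|\asymp2^{-k}\}$, which has measure $\asymp2^{-k}$,
\[
 \Bigl|\sum_jv_jF_j\Bigr|\lesssim2^{k/u}\sum_j2^{-|j-k|\beta}|v_j|
\]
for some $\beta=\beta(u)>0$: for $j<k$ the factor is $2^{-(k-j)/u}$, and for $j>k$ it is $2^{-(j-k)/u}$. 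Therefore
\[
 \Bigl\|\sum_jv_jF_j\Bigr\|_u^u\lesssim\sum_k2^{-k}2^k\Bigl(\sum_j2^{-|j-k|\beta}|v_j|\Bigr)^u\lesssim\|v\|_{\ell^u}^u
\]
by Young's inequality with an $\ell^1$ kernel.

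The main obstacle is checking the pointwise asymptotics on $\T$ carefully: the comparability $|1-\rho_j\zeta|\asymp\max(1-\rho_j,|1-\zeta|)$, and uniform handling of the region $|\zeta-1|\gtrsim1$. Both are elementary.
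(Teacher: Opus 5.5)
Your direct route is the paper's own proof: normalization via the boundary modulus $|F_j|^u=P_{\rho_j}$, the dyadic-arc bound $2^{-k/u}|F_j|\lesssim 2^{-|j-k|/u}$ on $E_k$ followed by Young's inequality, and a lower bound $F_j\gtrsim 2^{j/u}$ on an interval of length $\asymp 2^{-j}$ near $1$ for the moments. The duality sketch is unnecessary, and two small slips need fixing. First, for $j<k$ the pointwise bound should read $|F_j|\lesssim 2^{j/u}$, as your $\min$ formula already gives, not $2^{-j/u}2^{2k/u}$. Second, for $j=0$ your interval $[1-2^{-j},1)=[0,1)$ gives no lower bound on $t^n$, so either use $[\rho_j,1)$ as the paper does or treat $j=0$ separately.
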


\begin{proof}
Since $\operatorname{Re}(1-\rho_jz)>0$ on $\D$, the analytic branch in
\eqref{eq:packet-def} is unambiguous.  Its boundary modulus satisfies
\[
 |F_j(e^{i\theta})|^u=P_{\rho_j}(e^{i\theta}),
\]
so $\|F_j\|_{H^u}=1$, and the chosen branch is positive on $[0,1)$.

Put $\delta_j=1-\rho_j=(2^j+1)^{-1}\asymp2^{-j}$.  The elementary
estimate
\[
 |1-\rho_je^{i\theta}|\asymp\delta_j+|\theta|
 \qquad(-\pi\le\theta\le\pi)
\]
gives
\[
 |F_j(e^{i\theta})|
 \lesssim_u\frac{\delta_j^{1/u}}
 {(\delta_j+|\theta|)^{2/u}}.
 \tag{2.7}\label{eq:packet-pointwise}
\]
Let $E_0=\{1/2<|\theta|\le\pi\}$ and, for $k\ge1$, let
$E_k=\{2^{-k-1}<|\theta|\le2^{-k}\}$.  These arcs cover $\T$ up to a
null set and $m(E_k)\asymp2^{-k}$.  On $E_k$, \eqref{eq:packet-pointwise}
implies
\[
 |F_j(e^{i\theta})|\lesssim_u
 \begin{cases}
  2^{j/u},&j\le k,\\
  2^{(2k-j)/u},&j>k.
 \end{cases}
\]
Therefore, pointwise on $E_k$,
\[
 2^{-k/u}\sum_j|v_j||F_j(e^{i\theta})|
 \lesssim_u\sum_j2^{-|j-k|/u}|v_j|.
 \tag{2.8}\label{eq:packet-convolution}
\]
After taking the $u$th power, integrating over $E_k$, and summing in
$k$, we obtain
\[
 \left\|\sum_jv_jF_j\right\|_{H^u}^u
 \lesssim_u
 \sum_{k\ge0}\left(\sum_j2^{-|j-k|/u}|v_j|\right)^u.
\]
The sequence $(2^{-|m|/u})_{m\in\mathbb Z}$ belongs to $\ell^1$, so
Young's convolution inequality proves \eqref{eq:packet-synthesis}.

For the moment estimate, set $N=2^j$ and suppose $N\le n<2N$.  On
$t\in[\rho_j,1]$,
\[
 t^n\ge\rho_j^{2N}\ge c,
 \qquad
 1-\rho_jt\le1-\rho_j^2\le\frac2{N+1},
 \qquad
 1-\rho_j^2\ge\frac1{N+1}.
\]
It follows that $F_j(t)\ge c_u(N+1)^{1/u}$ throughout an interval of
length $(N+1)^{-1}$.  Hence
\[
 \int_0^1t^nF_j(t)\,dt
 \ge c_u(N+1)^{-1/u'}\asymp_u2^{-j/u'},
\]
which is \eqref{eq:packet-moment}.
\end{proof}

\section{The dyadic reduction}

\begin{theorem}[Dyadic synthesis]\label{thm:dyadic}
For $1<u<\infty$, define on finite sequences
\[
 \mathscr S_{\phi,u}a
 =a_{-1}\phi(0)+\sum_{j\ge0}a_j2^{-j/u'}\Delta_j\phi.
 \tag{3.1}
\]
Then $T_\phi:H^u\to H^u$ is bounded if and only if
$\mathscr S_{\phi,u}:\ell^u(\{-1,0,1,\ldots\})\to H^u$ is bounded,
and the norms are comparable with constants depending only on $u$.
\end{theorem}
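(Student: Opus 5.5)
Both directions will go through the Taylor-coefficient formula for $T_\phi$. Writing $\phi=\sum_n\widehat\phi(n)z^n$, we have
\[
 T_\phi f(z)=\sum_{n\ge0}\widehat\phi(n)\Big(\int_0^1t^nf(t)\,dt\Big)z^n .
\]
On the $j$th block, $\Delta_jT_\phi f$ is $\Delta_j\phi$ multiplied coefficientwise by the moments $\mu_n(f)=\int_0^1t^nf(t)\,dt$ for $2^j\le n<2^{j+1}$. The constant term is $\phi(0)\mu_0(f)$, with $|\mu_0(f)|\lesssim\|f\|_u$ by Fej\'er--Riesz.

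\textbf{Synthesis bounded implies $T_\phi$ bounded.} Factor the $j$th block as a synthesis coefficient times a multiplier. Put $a_j=2^{j/u'}A_j(f)=w_j(f)$ from Lemma~\ref{lem:moment} and $\lambda_n=\mu_n(f)/A_j(f)$ on block $j$. Then
\[
 T_\phi f=\mu_0\phi(0)+\sum_j a_j2^{-j/u'}M_{\lambda}\Delta_j\phi .
\]
To pass $M_\lambda$ outside the sum, apply Lemma~\ref{lem:dyadic-marc} to the globally defined sequence $\lambda$ (normalized per block) after checking its bounded variation on each dyadic block. For $2^j\le n<2^{j+1}$ we have $|\mu_n|\le A_j$ since $t^n\le t^{2^j}$, and
\[
 \sum_{n\in\text{block}}|\mu_{n+1}-\mu_n|\le\int_0^1t^{2^j}(1-t^{2^j})|f|\,dt\le A_j .
\]
The jumps between blocks are the delicate point, because the normalizations $A_j$ and $A_{j+1}$ differ. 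To avoid them, split into even and odd $j$ and handle each piece separately. One could also use a smooth interpolation of the normalizing factor, or absorb the ratio via the Littlewood--Paley equivalence (2.3): apply the multiplier blockwise, which is uniformly bounded on each block by Lemma~\ref{lem:dyadic-marc} with a local sequence, and use a vector-valued (Marcinkiewicz--Zygmund) extension of (2.3) to sum. With that in hand,
\[
 \|T_\phi f\|_u\lesssim|\phi(0)|\|f\|_u+\|\mathscr S\|\,\|(w_j)\|_{\ell^u}\lesssim\|\mathscr S\|\,\|f\|_u
\]
by Lemma~\ref{lem:moment}.

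\textbf{$T_\phi$ bounded implies synthesis bounded.} Test $T_\phi$ on the packets of Lemma~\ref{lem:packets}. Given a finite sequence $a$, set $f=\sum_j|a_j|F_j+|a_{-1}|$, or use signed coefficients $a_j$ with the phases handled blockwise. Then $\|f\|_u\lesssim\|a\|_{\ell^u}$ by (2.5). Since $F_k>0$, every moment satisfies $\mu_n(f)\ge|a_j|\,\mu_n(F_j)\gtrsim|a_j|2^{-j/u'}$ on block $j$, by (2.6).

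Now recover $a_j2^{-j/u'}\Delta_j\phi$ from $\Delta_jT_\phi f$. This means dividing by $\mu_n(f)$ on block $j$ and multiplying by $a_j2^{-j/u'}$. The ratio $\beta_n=a_j2^{-j/u'}/\mu_n(f)$ is bounded by (2.6). Its variation on a block is controlled because $n\mapsto\mu_n(f)$ is positive, decreasing and convex: the total variation of $1/\mu_n$ on the block is at most $1/\mu_{2^{j+1}}-1/\mu_{2^j}\lesssim2^{j/u'}/|a_j|$.

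\textbf{Main obstacle: the gluing.} In both directions the per-block multipliers have bounded variation uniformly, but the sequence as a whole jumps between blocks. I would deal with this exactly as above: decompose into blocks and use the Littlewood--Paley square-function equivalence (2.3) together with the vector-valued form of Lemma~\ref{lem:dyadic-marc}, i.e. uniform boundedness of block multipliers in $L^u(\ell^2)$. These give
\[
 \Big\|\sum_jM_{\beta^{(j)}}\Delta_jG\Big\|_u\lesssim\sup_j\mathfrak M(\beta^{(j)})\,\|G\|_u .
\]
This yields $\|\mathscr Sa\|_u\lesssim\|T_\phi f\|_u+|a_{-1}\phi(0)|\lesssim\|T_\phi\|\,\|a\|_{\ell^u}$. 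The $\phi(0)$ term follows from testing $f=1$.
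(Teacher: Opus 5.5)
Your proposal is correct and follows the paper's proof: Lemma~\ref{lem:moment} with the normalized moments $\mu_n/A_j$ as a Marcinkiewicz multiplier gives one direction, and the packets of Lemma~\ref{lem:packets} with the reciprocal moments $a_j2^{-j/u'}/\mu_n$ give the other. Putting the phase of $a_j$ into $\beta$ is a harmless variant of the paper's splitting into four nonnegative parts.

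The ``gluing'' you flag as the main obstacle is not actually an obstacle: since $|\lambda_n|\le1$ and $|\beta_n|\le C$, each cross-block jump is at most twice the supremum, so the glued sequences satisfy \eqref{eq:marc-condition} directly, exactly as the paper notes. The even/odd splitting and the vector-valued Marcinkiewicz--Zygmund detour are therefore unnecessary.
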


\begin{proof}
Assume $\mathscr S_{\phi,u}$ is bounded. For $f\in H^u$ put
$m_n=\int_0^1t^nf(t)dt$ and
$A_j=\int_0^1t^{2^j}|f(t)|dt$. Lemma~\ref{lem:moment} controls
$(2^{j/u'}A_j)$ in $\ell^u$. Hence the series
\[
 U:=\sum_{j\ge0}A_j\Delta_j\phi
   =\mathscr S_{\phi,u}\bigl(0,(2^{j/u'}A_j)_{j\ge0}\bigr)
 \tag{3.2}\label{eq:dyadic-U}
\]
converges in $H^u$ and satisfies
$\|U\|_{H^u}\lesssim_u\|\mathscr S_{\phi,u}\|\|f\|_{H^u}$.
Set $d_0=0$. On the $j$th block set
$d_n=m_n/A_j$ if $A_j>0$, and $d_n=0$ otherwise. Then $|d_n|\le1$ and
\[
 \sum_{n=2^j}^{2^{j+1}-2}|d_{n+1}-d_n|
 \le A_j^{-1}\int_0^1|f(t)|
 \sum_{n=2^j}^{2^{j+1}-2}t^n(1-t)dt\le1.
 \tag{3.3}\label{eq:dyadic-variation}
\]
For every $j$, the remaining jump
$|d_{2^{j+1}}-d_{2^{j+1}-1}|$ is at most $2$. Thus
$\mathfrak M(d)\le4$ in the precise convention of
Lemma~\ref{lem:dyadic-marc}.  The coefficient of $z^n$, $n\ge1$, in
$M_dU$ is $\widehat\phi(n)m_n$, so $M_dU$ is exactly the
positive-frequency part of $T_\phi f$.  Lemma~\ref{lem:dyadic-marc} and
\eqref{eq:dyadic-U} therefore give
\[
 \|T_\phi f-\widehat\phi(0)m_0\|_{H^u}
 \lesssim_u\|\mathscr S_{\phi,u}\|\|f\|_{H^u}.
\]
The Fej\'er--Riesz radial restriction inequality gives
$|m_0|\le\int_0^1|f(t)|dt\lesssim_u\|f\|_{H^u}$, while testing
$\mathscr S_{\phi,u}$ on the $-1$ coordinate gives
$|\widehat\phi(0)|\le\|\mathscr S_{\phi,u}\|$. This proves the required
upper estimate.

Conversely, suppose $T_\phi$ is bounded and first let $a_j\ge0$ be
finite. Put $f_a=\sum_ja_jF_j$. Lemma~\ref{lem:packets} controls its
$H^u$ norm. Its moments $M_n=\int_0^1t^nf_a(t)dt$ are positive and
decreasing, and for $2^j\le n<2^{j+1}$,
\[
 M_n\gtrsim_u a_j2^{-j/u'}.
 \tag{3.4}\label{eq:packet-moment-lower}
\]
Set $\eta_0=0$ and, for $2^j\le n<2^{j+1}$, set
\[
 \eta_n=\begin{cases}
 a_j2^{-j/u'}/M_n,&a_j>0,\\
 0,&a_j=0.
 \end{cases}
\]
By \eqref{eq:packet-moment-lower}, $\sup_n|\eta_n|\le C_u$. Since
$M_n$ is decreasing, $\eta_n$ is increasing on each dyadic block; hence
its total variation inside that block is at most $C_u$.  The jump to the
next block is at most $2C_u$. Consequently
$\mathfrak M(\eta)\le4C_u$, including the cross-block jump required in
\eqref{eq:marc-condition}.  Now the coefficient of $z^n$ in
$T_\phi f_a$ is $\widehat\phi(n)M_n$, so Lemma~\ref{lem:dyadic-marc}
gives
\[
 \left\|\sum_{j\ge0}a_j2^{-j/u'}\Delta_j\phi\right\|_{H^u}
 \le C_u\|T_\phi f_a\|_{H^u}
 \lesssim_u\|T_\phi\|\,\|a\|_{\ell^u}.
 \tag{3.5}\label{eq:positive-synthesis}
\]
Every complex sequence is the sum of the positive and negative parts of
its real and imaginary parts, each having $\ell^u$ norm no larger than
the original norm. Applying \eqref{eq:positive-synthesis} four times
proves the nonconstant synthesis estimate. Finally, $T_\phi1(0)=\phi(0)$
controls the $-1$ coordinate and completes the proof.
\end{proof}

For $u=p$, Littlewood--Paley theory converts this theorem to a positive
operator. Define
\[
 \Pcal_{\phi,p}:\ell^r\to L^r,
 \qquad \Pcal_{\phi,p}b=\sum_{j\ge0}b_j|X_j|^2.
 \tag{3.6}\label{eq:positive-column}
\]
For finite $a$,
\[
 \left\|\sum_ja_jX_j\right\|_p^2
 \asymp_p\left\|\sum_j|a_j|^2|X_j|^2\right\|_r,
 \qquad\|(|a_j|^2)\|_{\ell^r}=\|a\|_{\ell^p}^2.
\]
Since \eqref{eq:positive-column} is positive,
\[
 \|T_\phi\|\asymp_p|\phi(0)|+\|\Pcal_{\phi,p}\|^{1/2}.
 \tag{3.7}\label{eq:positive-norm}
\]

\subsection{Resolution invariance}

\begin{definition}[Admissible analytic resolution]\label{def:admissible}
Let
\[
 I_k=\{n\in\mathbb N:2^k\le n<2^{k+1}\},\qquad k\ge0.
\]
A family of scalar sequences
$\sigma=(\sigma_j)_{j\ge0}$ on $\mathbb N$ is called
$(L,M,p)$-admissible if:
\begin{enumerate}[label=\textup{(\alph*)}]
 \item $\sigma_j(n)=0$ for $n\in I_k$ whenever $|j-k|>L$;
 \item $\sum_{j\ge0}\sigma_j(n)=1$ for every $n\ge1$;
 \item for every $|\ell|\le L$, the patched sequence
 \[
  m_\ell(n)=\sigma_{k+\ell}(n)\qquad(n\in I_k),
  \tag{3.8}\label{eq:patched-multiplier}
 \]
 with $\sigma_j=0$ for $j<0$, is an $H^p$ coefficient multiplier of
 norm at most $M$.
\end{enumerate}
Put
\[
 W_j^\sigma\phi(z)=\sum_{n\ge1}\sigma_j(n)\widehat\phi(n)z^n,
 \qquad
 \mathscr S_{\phi,p}^{\sigma}a
 =a_{-1}\phi(0)+
  \sum_{j\ge0}a_j2^{-j/p'}W_j^\sigma\phi.
 \tag{3.9}\label{eq:smooth-synthesis}
\]
The hard resolution is $\sigma_j=\mathbf1_{I_j}$ and has $L=0$.
\end{definition}

The usual compactly supported smooth analytic Littlewood--Paley
partitions are admissible: finite overlap and the partition property are
immediate, while \eqref{eq:patched-multiplier} follows from
Lemma~\ref{lem:dyadic-marc}; see also
\cite{Zygmund2002,Grafakos2014}.

\begin{theorem}[Resolution invariance]\label{thm:resolution}
Let $2<p<\infty$ and let $\sigma$ be an $(L,M,p)$-admissible analytic
resolution. Then
\[
 \boxed{
 \|\mathscr S_{\phi,p}^{\sigma}:\ell^p\to H^p\|
 \asymp_{p,L,M}
 \|\mathscr S_{\phi,p}:\ell^p\to H^p\|.}
 \tag{3.10}\label{eq:resolution-norm}
\]
For the nonconstant coordinate tails,
\[
\begin{split}
 \|\mathscr S_{\phi,p}^{\sigma,>N}\|
 &\lesssim_{p,L,M}
 \|\mathscr S_{\phi,p}^{>N-L}\|,\\
 \|\mathscr S_{\phi,p}^{>N}\|
 &\lesssim_{p,L}
 \|\mathscr S_{\phi,p}^{\sigma,>N-L}\|,
\end{split}
 \tag{3.11}\label{eq:resolution-tails}
\]
where a negative cutoff means the full nonconstant synthesis.
Consequently,
\[
 \|T_\phi\|_{\mathrm e}
 \asymp_{p,L,M}
 \lim_{N\to\infty}\|\mathscr S_{\phi,p}^{\sigma,>N}\|,
 \qquad
 T_\phi\text{ is compact}
 \iff
 \|\mathscr S_{\phi,p}^{\sigma,>N}\|\longrightarrow0.
 \tag{3.12}\label{eq:resolution-essential}
\]
\end{theorem}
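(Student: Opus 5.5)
The plan is to prove both inequalities in \eqref{eq:resolution-norm} by expressing each synthesis as a finite sum of the other, twisted by at most $2L+1$ bounded coefficient multipliers, and to keep track of how supports in $j$ move. The constant coordinate $a_{-1}\phi(0)$ is identical in both syntheses, so it suffices to treat the nonconstant parts $\mathscr S^{>N}$, with $N=-1$ giving the full nonconstant synthesis. Both tail estimates in \eqref{eq:resolution-tails} then come from the same computations, by noting that a sequence supported in $j>N$ is sent to sequences supported in $k>N-L$.

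\emph{Smooth bounded by hard.} Condition (a) gives $W_j^\sigma\phi=\sum_{|k-j|\le L}M_{\sigma_j}\Delta_k\phi$. Reindexing with $j=k+\ell$, we get for finite $a$
\[
 \sum_j a_j2^{-j/p'}W_j^\sigma\phi
 =\sum_{|\ell|\le L}M_{m_\ell}\Bigl(\sum_k a_{k+\ell}2^{-(k+\ell)/p'}\Delta_k\phi\Bigr),
\]
because on $I_k$ the multiplier $\sigma_{k+\ell}$ coincides with the patched sequence $m_\ell$ of \eqref{eq:patched-multiplier}. The inner sum is $\mathscr S_{\phi,p}$ applied to $b^{(\ell)}_k=2^{-\ell/p'}a_{k+\ell}$, and $\|b^{(\ell)}\|_{\ell^p}\le2^{L/p'}\|a\|_{\ell^p}$. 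If $a$ is supported in $j>N$, then $b^{(\ell)}$ is supported in $k>N-L$. Condition (c) bounds each $M_{m_\ell}$ by $M$, and summing over $\ell$ gives the first line of \eqref{eq:resolution-tails} with constant $(2L+1)M2^{L/p'}$.

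\emph{Hard bounded by smooth.} This is the step I expect to be the main obstacle, because it needs hard cutoffs of smooth blocks, $\Delta_kW^\sigma_{k+\ell}\phi$, and these are not a fixed multiplier applied to one smooth synthesis. By condition (b) we have $\Delta_k\phi=\sum_{|\ell|\le L}\Delta_kW^\sigma_{k+\ell}\phi$, since $k\ge0$ forces $n\ge1$ on $I_k$. Hence
\[
 \sum_k a_k2^{-k/p'}\Delta_k\phi=\sum_{|\ell|\le L}\sum_kc_{k+\ell}\Delta_kW^\sigma_{k+\ell}\phi,
 \qquad c_j=a_{j-\ell}2^{-(j-\ell)/p'}.
\]
For a fixed $\ell$ I would randomize. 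Let $(\varepsilon_j)$ be independent Rademacher signs and set $G^\varepsilon=\sum_j\varepsilon_jc_jW^\sigma_j\phi$. Then
\[
 \mathbb E\Bigl[\sum_k\varepsilon_{k+\ell}\Delta_kG^\varepsilon\Bigr]
 =\sum_kc_{k+\ell}\Delta_kW^\sigma_{k+\ell}\phi .
\]
The operator $G\mapsto\sum_k\varepsilon_{k+\ell}\Delta_kG$ is a hard-dyadic sign multiplier, which is bounded on $H^p$ uniformly in the signs by the Littlewood--Paley equivalence (2.3). We also have $\|G^\varepsilon\|_p\le\|\mathscr S^{\sigma,>N-L}_{\phi,p}\|\,2^{L/p'}\|a\|_{\ell^p}$ when $a$ is supported in $k>N$, since then $c$ is supported in $j>N-L$. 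Minkowski's inequality for the expectation then gives the second line of \eqref{eq:resolution-tails}, with a constant depending only on $p$ and $L$, as claimed.

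\emph{Consequences.} Taking $N=-1$ and adding back the constant coordinate gives \eqref{eq:resolution-norm}. Letting $N\to\infty$ in \eqref{eq:resolution-tails} shows that $\lim_N\|\mathscr S^{\sigma,>N}_{\phi,p}\|\asymp\lim_N\|\mathscr S^{>N}_{\phi,p}\|$. Both limits exist by monotonicity of the tail norms. It remains to identify $\lim_N\|\mathscr S^{>N}_{\phi,p}\|$ with $\|T_\phi\|_{\rm e}$ up to constants, and for this I would use Theorem~\ref{thm:main}.

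First, the tail version of the Littlewood--Paley computation leading to \eqref{eq:positive-norm} gives $\|\mathscr S^{>N}_{\phi,p}\|^2\asymp_p\|\Pcal^{>N}_{\phi,p}\|$. Second, by the positive column identity announced in the abstract (squared embedding norm equals the column norm), $\|\Pcal^{>N}_{\phi,p}\|=\|\Ccal^{>N}_{\phi,p}\|^2$. Combining these with \eqref{eq:main-norm-tail} yields \eqref{eq:resolution-essential}, and the compactness statement then follows from \eqref{eq:main-compact}.
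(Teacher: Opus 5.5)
Your proof is correct. The smooth-by-hard direction matches the paper's identity (3.13). The reverse direction, however, uses a genuinely different device.

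The paper colours the smooth indices modulo $2L+1$. Within one colour the blocks $W_j^\sigma\phi$ then have pairwise disjoint hard supports, so a single deterministic multiplier, the indicator of $\bigcup_{j\equiv c}I_{j+\ell}$, applied to one smooth synthesis extracts the pieces $\Delta_{j+\ell}W_j^\sigma\phi$. You instead decouple by Rademacher averaging, using $\mathbb E[\varepsilon_{k+\ell}\varepsilon_j]=\delta_{j,k+\ell}$ and the uniform $H^p$ bound for hard dyadic sign multipliers.

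Both arguments rest only on Lemma~\ref{lem:dyadic-marc} or (2.3) for symbols constant on hard blocks. Your version avoids the disjointness bookkeeping and needs $2L+1$ pieces rather than $(2L+1)^2$. The paper's version is deterministic, with one fixed multiplier per piece.

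For \eqref{eq:resolution-essential}, the paper gets the hard-tail identity directly from Theorem~\ref{thm:dyadic}, since $\mathscr S_{\phi^{>N},p}$ is exactly the hard tail $\mathscr S^{>N}_{\phi,p}$, together with the head-projection argument. You detour through $\Pcal^{>N}_{\phi,p}$, Theorem~\ref{thm:exact} and \eqref{eq:main-norm-tail}. That detour is not circular, because \eqref{eq:main-norm-tail} is proved in Section~4 without using the resolution theorem, but it is longer than needed. Also, the column identity should be cited from Theorem~\ref{thm:exact} rather than from the abstract.
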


\begin{proof}
Write $w_j=2^{-j/p'}$ and extend all negative coordinates by zero.
Finite scale overlap gives, first for finite inputs,
\[
 \sum_ja_jw_jW_j^\sigma\phi
 =\sum_{\ell=-L}^{L}2^{-\ell/p'}M_{m_\ell}
   \left(\sum_{k\ge0}a_{k+\ell}w_k\Delta_k\phi\right).
 \tag{3.13}\label{eq:smooth-from-hard}
\]
This is an exact coefficient identity on each $I_k$.  The multiplier
assumption and the hard synthesis norm prove the first inequality in
\eqref{eq:resolution-norm}.

For the reverse inequality, the partition property gives
\[
 \sum_kb_kw_k\Delta_k\phi
 =\sum_{\ell=-L}^{L}\sum_{j\ge0}
 b_{j+\ell}w_{j+\ell}\Delta_{j+\ell}W_j^\sigma\phi.
 \tag{3.14}\label{eq:hard-from-smooth}
\]
Let $Q=2L+1$ and split $j$ into its residue classes modulo $Q$.
For fixed $\ell$ and one residue class $c$, set
\[
 a_j^{\ell,c}=
 \begin{cases}
 2^{-\ell/p'}b_{j+\ell},&j\equiv c\pmod Q,\\
 0,&\text{otherwise}.
 \end{cases}
\]
The hard-scale supports of $W_j^\sigma\phi$ belonging to one colour are
pairwise disjoint.  The multiplier whose symbol is the indicator of
$\bigcup_{j\equiv c\pmod Q}I_{j+\ell}$ therefore extracts precisely the
terms $\Delta_{j+\ell}W_j^\sigma\phi$.  Its symbol is constant on every
hard block, bounded by one, and has at most one jump in each variation
sum in \eqref{eq:marc-condition}; Lemma~\ref{lem:dyadic-marc} bounds it
uniformly on $H^p$. Since
\[
 a_j^{\ell,c}w_j=b_{j+\ell}w_{j+\ell},
 \qquad
 \|a^{\ell,c}\|_{\ell^p}
 \le2^{-\ell/p'}\|b\|_{\ell^p},
\]
summing over the finitely many $\ell$ and colours proves the reverse
inequality. Density extends the identities from finite inputs.

If $a$ is supported on $j>N$, every hard coordinate in
\eqref{eq:smooth-from-hard} satisfies $k>N-L$. Conversely, if $b$ is
supported on $k>N$, every smooth input in the coloured reconstruction
satisfies $j>N-L$. This proves \eqref{eq:resolution-tails}. Tail norms
are decreasing, and shifting a cutoff by the fixed integer $L$ does not
change their limits. The hard-tail essential-norm and compactness result
follows directly from Theorem~\ref{thm:dyadic}: the uniformly bounded hard
head projections have finite-dimensional range and converge strongly to
the identity, while their complementary tails kill compact operators in
operator norm.  Combining that hard-tail identity with
\eqref{eq:resolution-tails} proves \eqref{eq:resolution-essential}.
\end{proof}

\section{Vector embedding, tails, and the paraproduct dual}

\begin{theorem}\label{thm:exact}
$\Pcal_{\phi,p}:\ell^r\to L^r$ is bounded if and only if
$\Ccal_{\phi,p}:H^t\to\ell^t(H^2)$ is bounded, and
\[
 \boxed{\|\Ccal_{\phi,p}\|^2=\|\Pcal_{\phi,p}\|.}
 \tag{4.1}\label{eq:embedding-positive-exact}
\]
\end{theorem}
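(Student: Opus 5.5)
The proof is a duality computation on the positive cone. For $h\in H^t$ we have $\|X_jh\|_2^2=\int|h|^2|X_j|^2\,dm$. Put $v=|h|^2\in L^{t/2}=L^{r'}$, since $t=2r'$. Then for every nonnegative finite $b\in\ell^r$,
\[
 \sum_jb_j\|X_jh\|_2^2=\int v\,\Pcal_{\phi,p}b\,dm .
\]
Because $t/2=r'$ is the dual exponent of $r$, taking the supremum over $b\ge0$ with $\|b\|_{\ell^r}\le1$ gives
\[
 \|(\|X_jh\|_2^2)_j\|_{\ell^{r'}}=\|(X_jh)_j\|_{\ell^t(H^2)}^2 .
\]
Hence
\[
 \|\Ccal_{\phi,p}\|^2=\sup_{\|h\|_t\le1}\ \sup_{b\ge0,\ \|b\|_r\le1}\int|h|^2\,\Pcal_{\phi,p}b\,dm .
\]

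Next we interchange the suprema and compute the inner one in $h$. For fixed $b\ge0$, the function $\Pcal_{\phi,p}b$ is nonnegative. Its $L^r$ norm is the supremum of $\int v\,\Pcal_{\phi,p}b$ over $v\ge0$ with $\|v\|_{r'}\le1$. Restricting $v=|h|^2$ with $h$ analytic gives at most this. So $\|\Ccal\|^2\le\sup_b\|\Pcal b\|_r$, and for positive $\Pcal$ that supremum is $\|\Pcal\|$, since $|\Pcal b|\le\Pcal|b|$ pointwise.

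For the reverse inequality we must realize any nonnegative $v\in L^{r'}$ as $|h|^2$ with $h\in H^t$ and $\|h\|_t^t=\|v\|_{r'}^{r'}$. This is the step needing care. If $\log v\in L^1$, take the outer function
\[
 h=\exp\Bigl(\tfrac12\bigl(\log v+i\widetilde{\log v}\bigr)\Bigr),
\]
whose boundary modulus is $v^{1/2}$, so that $h\in H^t$. In general, replace $v$ by $v+\varepsilon$ (or by $\max(v,\varepsilon)$). By monotone convergence the pairing $\int v\,\Pcal b$ is approximated, and the norms converge. Truncating $v$ from above as well makes $h$ and $1/h$ bounded, which also yields the remark that bounded invertible outer tests suffice.

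Finally, the supremum of $\int v\,\Pcal b$ over $v\ge0$ and $b\ge0$ in the unit balls is attained approximately by finite $b$ and by $v$ dual to $\Pcal b$. It may therefore be taken over finite $b$ first, and the operator $\Pcal$ is defined on finite sequences and extended by density. This gives equality in $[0,\infty]$, and with it the boundedness equivalence. We expect the only delicate points to be this outer-function factorization with the approximation when $\log v\notin L^1$, and the justification of interchanging suprema with possibly infinite values. Both are handled by monotone limits, since every quantity involved is nonnegative.
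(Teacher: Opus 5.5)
Your proposal is correct and is essentially the paper's argument. The paper runs the same positive-cone duality through the adjoint $(\Pcal_{\phi,p}^*F)_j=\int_\T|X_j|^2F\,dm$ evaluated at $F=|h|^2$ rather than through your interchange of suprema. It treats general $F\in L^{r'}_+$ exactly as you do, via the outer function $h_\varepsilon$ with $|h_\varepsilon|^2=F+\varepsilon$ followed by a monotone limit as $\varepsilon\downarrow0$.
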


\begin{proof}
The positive adjoint is
\[
 (\Pcal_{\phi,p}^*F)_j=\int_\T|X_j|^2F\,dm,\qquad F\in L^{r'}_+.
\]
For $h\in H^t$, $1/2=1/p+1/t$ and $t=2r'$ give
\[
 \|\Pcal_{\phi,p}^*|h|^2\|_{\ell^{r'}}
 =\left(\sum_j\|X_jh\|_2^t\right)^{2/t},
 \qquad\||h|^2\|_{r'}=\|h\|_t^2.
 \tag{4.2}\label{eq:outer-positive-identity}
\]
Thus $\|\Ccal_{\phi,p}\|^2\le\|\Pcal_{\phi,p}\|$.

Conversely, for $F\in L^{r'}_+$ and $\varepsilon>0$,
$\log(F+\varepsilon)\in L^1$. Choose outer $h_\varepsilon$ with
$|h_\varepsilon|^2=F+\varepsilon$. Then $h_\varepsilon\in H^t$ and
$1/h_\varepsilon\in H^\infty$. Apply \eqref{eq:outer-positive-identity} and let
$\varepsilon\downarrow0$. Monotone convergence in the nonnegative
coordinate sum gives
\[
 \|\Pcal_{\phi,p}^*F\|_{\ell^{r'}}
 \le\|\Ccal_{\phi,p}\|^2\|F\|_{r'}.
\]
Positive duality proves
\eqref{eq:embedding-positive-exact}.
\end{proof}

This theorem and \eqref{eq:positive-norm} prove the boundedness part of
Theorem~\ref{thm:main}. For tails, let $Q_N$ retain the constant and the
blocks $j\le N$. These Fourier projections are uniformly bounded on
$H^p$, converge strongly to the identity, and
\[
 (I-Q_N)T_\phi=T_{\phi^{>N}},\qquad Q_NT_\phi\text{ has finite rank}.
\]
If $K$ is compact, $\|(I-Q_N)K\|\to0$. Therefore
\[
 \|T_\phi\|_{\rm e}\asymp_p\lim_N\|T_{\phi^{>N}}\|.
 \tag{4.3}
\]
Applying \eqref{eq:embedding-positive-exact} to $(X_j)_{j>N}$ proves
\eqref{eq:main-norm-tail}--\eqref{eq:main-compact}.
The condition concerns coordinate tails; it does not assert that
$\Ccal_{\phi,p}$ itself is compact.

\begin{corollary}[Paraproduct dual]\label{cor:dual}
$T_\phi:H^p\to H^p$ is bounded if and only if
\eqref{eq:paraproduct-intro} is bounded.
Moreover,
\[
 \|\Piop_{\phi,p}\|\asymp_p\|\Ccal_{\phi,p}\|,
 \quad
 \|T_\phi\|_{\rm e}\asymp_p\lim_N\|\Piop_{\phi,p}^{>N}\|,
\]
and the tail tends to zero exactly when $T_\phi$ is compact.
\end{corollary}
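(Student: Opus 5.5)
The plan is to identify $\Piop_{\phi,p}$ with the Banach adjoint of $\Ccal_{\phi,p}$ under Hardy-space duality, so that Theorem~\ref{thm:exact} and \eqref{eq:main-norm-tail} transfer directly. Fix $q=t'$, so that $1<q<2$. Pair $H^t$ with $H^q$ through the Cauchy pairing $\langle h,g\rangle=\int_\T h\overline g\,dm$. By the M.~Riesz theorem, $P_+$ is bounded on $L^q$ and $L^t$. Hence $(H^t)^*\cong H^q$ and $(H^q)^*\cong H^t$ with constants depending only on $p$. On the vector side, $\ell^t(H^2)^*=\ell^q(H^2)$ isometrically, via $\langle (v_j),(u_j)\rangle=\sum_j\int_\T v_j\overline{u_j}\,dm$.

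First I compute the formal adjoint on finite sequences. Take $(u_j)$ with finitely many nonzero $u_j\in H^2$, and $h\in H^\infty$. Then
\[
 \sum_j\int_\T X_jh\,\overline{u_j}\,dm
 =\int_\T h\,\overline{\textstyle\sum_j\overline{X_j}u_j}\,dm
 =\Bigl\langle h,P_+\Bigl(\sum_j\overline{X_j}u_j\Bigr)\Bigr\rangle .
\]
The last step holds because $h$ is analytic, so only the analytic projection of $\sum_j\overline{X_j}u_j$ pairs nontrivially. Each $X_j$ is a polynomial, so every term is legitimate.

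Next I take suprema in the correct order. By duality in $H^q$,
\[
 \|\Piop_{\phi,p}(u)\|_q\asymp_p\sup_{\|h\|_t\le1}|\langle h,\Piop_{\phi,p}u\rangle|
 \le\|\Ccal_{\phi,p}\|\,\|u\|_{\ell^q(H^2)} .
\]
Here $h$ ranges over the dense class $H^\infty$, and by Theorem~\ref{thm:main} the embedding norm is unchanged on that class. Conversely, for each $h$,
\[
 \|\Ccal_{\phi,p}h\|_{\ell^t(H^2)}=\sup_{\|u\|_{\ell^q(H^2)}\le1}|\langle\Ccal_{\phi,p}h,u\rangle|,
\]
and finite sequences suffice in this supremum. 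By the identity above,
\[
 |\langle\Ccal_{\phi,p}h,u\rangle|\le\|h\|_t\,\|\Piop_{\phi,p}u\|_q\cdot C_p ,
\]
where $C_p$ comes from the Hölder bound on the $H^t$--$H^q$ pairing. Together these give $\|\Piop_{\phi,p}\|\asymp_p\|\Ccal_{\phi,p}\|$ in $[0,\infty]$, so boundedness of $\Piop_{\phi,p}$ is equivalent to boundedness of $\Ccal_{\phi,p}$. The boundedness part of Theorem~\ref{thm:main} then gives the equivalence with boundedness of $T_\phi$.

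For the tails I run the same computation with only the coordinates $j>N$ retained. This gives $\|\Piop_{\phi,p}^{>N}\|\asymp_p\|\Ccal_{\phi,p}^{>N}\|$ with constants independent of $N$. The equivalence persists in the limit, and \eqref{eq:main-norm-tail} yields $\|T_\phi\|_{\rm e}\asymp_p\lim_N\|\Piop_{\phi,p}^{>N}\|$. The compactness statement then follows from \eqref{eq:main-compact}.

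The main obstacle is technical: justifying the interchange of sum and integral, and making sense of $\Piop_{\phi,p}$ on all of $\ell^q(H^2)$ rather than just finite sequences. Both are handled by defining $\Piop_{\phi,p}$ first on finite sequences and then extending by density. The extension is possible exactly when the uniform bound holds, so no separate convergence argument is needed.
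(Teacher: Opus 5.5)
Your proposal is correct and follows essentially the paper's own route: you identify $\Piop_{\phi,p}$ with the adjoint of $\Ccal_{\phi,p}$ through the Cauchy pairing, use boundedness of $P_+$ on $L^q$ to get $(H^t)^*\simeq H^q$ uniformly for all coordinate tails, and then invoke Theorem~\ref{thm:main}. You simply spell out the two supremum inequalities that the paper compresses into one line; two small points can be tidied. The H\"older constant in the second inequality is exactly $1$. The appeal to Theorem~\ref{thm:main} for testing on $H^\infty$ is unnecessary, since $\|\Ccal_{\phi,p}h\|\le\|\Ccal_{\phi,p}\|\,\|h\|_t$ holds for every $h$ and density of $H^\infty$ in $H^t$ is all that is used.
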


\begin{proof}
Because $1/q=1/p+1/2$ and $1<q<2$, $P_+$ is bounded on $L^q$. For
finite $u$ and $h\in H^t$,
\[
 \langle h,\Piop_{\phi,p}u\rangle
 =\sum_j\langle X_jh,u_j\rangle_{H^2}.
\]
Thus $\Piop_{\phi,p}$ is the analytic representative of
$\Ccal_{\phi,p}^*$. The standard identification $(H^t)^*\simeq H^q$
gives the comparison, uniformly for all tails.
\end{proof}

\subsection{A dense-frequency separation from the known sufficient space}

The next application shows that Theorem~\ref{thm:main} is not merely a
reformulation of the previously known blockwise sufficient condition.
Recall that $E\subset\mathbb N_0$ is an analytic $\Lambda(p)$-set if some
$C_E<\infty$ satisfies $\|f\|_{H^p}\le C_E\|f\|_{H^2}$ for every
analytic polynomial whose spectrum is contained in $E$.
We write $C_{E,p}$ for the least such constant.

\begin{theorem}[Dense-frequency separation]\label{thm:dense-separation}
For every $2<p<\infty$ there is $\phi\in\Hol(\D)$ such that:
\begin{enumerate}[label=\textup{(\roman*)}]
 \item $T_\phi:H^p\to H^p$ is bounded but not compact;
 \item every sufficiently large hard dyadic block is active and contains
 a consecutive frequency interval of length $2^{j-1}$;
 \item $\operatorname{supp}\widehat\phi$ is not an analytic
 $\Lambda(p)$-set;
 \item
 \[
  2^{-j/p'}\|\Delta_j\phi\|_{H^p}=1
  \qquad(j\ge j_0).
  \tag{4.4}
 \]
\end{enumerate}
In particular, the symbol lies outside the known sufficient class defined
by
\[
 \bigl(2^{-j/p'}\|\Delta_j\phi\|_{H^p}\bigr)_j\in\ell^t,
 \tag{4.5}\label{eq:known-block-sufficient}
\]
and it is neither lacunary nor spectrally thin.
\end{theorem}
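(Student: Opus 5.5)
The plan is an explicit construction in which every block is a normalized, shifted Dirichlet kernel, with boundedness checked through the positive column operator. For $j\ge1$ put $M_j=2^{j-1}$ and
\[
 \Delta_j\phi(z)=c_j\,z^{2^j}D_{M_j}(z),\qquad D_M(z)=1+z+\cdots+z^{M-1},
\]
with $\widehat\phi(n)=0$ for $n<2$.

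The block $\Delta_j\phi$ is supported on the consecutive interval $[2^j,2^j+2^{j-1})\subset I_j$. The constants $c_j>0$ are fixed so that $2^{-j/p'}\|\Delta_j\phi\|_{H^p}=1$. Since $\|D_M\|_p\asymp_p M^{1/p'}$, this gives $c_j\asymp_p1$. This already yields (ii) and (iv) with $j_0=1$. Because (4.4) gives a constant sequence, which is not in $\ell^t$, the symbol violates (4.5). It is also visibly not lacunary.

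For (iii), the spectrum of $\phi$ contains intervals of every length $M_j$. Since $\|D_M\|_p/\|D_M\|_2\asymp_p M^{1/2-1/p}\to\infty$ and $p>2$, no constant $C_{E,p}$ can exist.

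The core step is boundedness, which by (3.7) amounts to showing $\Pcal_{\phi,p}:\ell^r\to L^r$ is bounded. Set $X_j=2^{-j/p'}\Delta_j\phi$.
\begin{enumerate}[label=\textup{(\alph*)}]
 \item Start from the standard Dirichlet bound $|D_M(e^{i\theta})|\lesssim\min\{M,|\theta|^{-1}\}$. It gives
 \[
  |X_j(e^{i\theta})|^2\lesssim_p 2^{2j/p}\bigl(1+2^j|\theta|\bigr)^{-2}.
 \]
 \item Follow the proof of Lemma~\ref{lem:packets}, using the dyadic arcs $E_k$ with $m(E_k)\asymp2^{-k}$. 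On $E_k$, for nonnegative $b$, this yields
 \[
  \sum_j b_j|X_j|^2\lesssim 2^{2k/p}\Bigl(\sum_{j\le k}2^{-2(k-j)/p}b_j+\sum_{j>k}2^{-(j-k)(2-2/p)}b_j\Bigr)=:2^{2k/p}(\kappa*b)_k .
 \]
 Here $\kappa$ is summable on $\mathbb Z$, because $2/p>0$ and $2-2/p>0$.
 \item Raise to the power $r=p/2$ and use $2^{-k}\cdot2^{2kr/p}=1$. This gives $\|\Pcal_{\phi,p}b\|_r^r\lesssim\sum_k(\kappa*b)_k^r$.
 \item Young's inequality then bounds this by $\|b\|_{\ell^r}^r$. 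Splitting a general $b$ into positive parts handles complex sequences.
\end{enumerate}
Hence $T_\phi$ is bounded by Theorem~\ref{thm:main} (equivalently by (3.7) and Theorem~\ref{thm:exact}).

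Non-compactness follows from the tails. Testing $\Pcal_{\phi,p}$ on a unit vector $e_j$ with $j>N$ gives
\[
 \|\Pcal^{>N}_{\phi,p}\|\ge\||X_j|^2\|_r=\|X_j\|_p^2=1 .
\]
So $\|\Ccal^{>N}_{\phi,p}\|^2=\|\Pcal^{>N}_{\phi,p}\|\ge1$ for every $N$, by (4.1) applied to tails. Then (1.7) shows $T_\phi$ is not compact, and (1.6) gives $\|T_\phi\|_{\rm e}\gtrsim_p1$.

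I expect the main obstacle to be step (b): checking that the Dirichlet envelope decays fast enough off the peak to give an $\ell^1$ kernel after the $r$-th power rescaling. The $j>k$ tail is where the quadratic decay $2^{2(k-j)}$ must beat the growing height $2^{2j/p}$. It does, since $p>1$. If the hard-block Dirichlet kernels caused trouble, I would fall back on Fejér-type smoothed blocks with the same support interval. These have envelope $2^{2j/p}(1+2^j|\theta|)^{-4}$ and still leave (ii)–(iv) intact.
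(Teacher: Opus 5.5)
Your proof is correct. It differs from the paper's only in the one substantive step, the boundedness of $\Pcal_{\phi,p}$.

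The paper uses the same blocks up to rotation: $|H_N|^2=c_NF_N$ is a normalized squared Dirichlet kernel with $N_j+1=2^{j-1}$. It centres the $j$th block at $e^{i\theta_j}$ so that arcs $I_j$ of radius $2^{\eta j}/2^{j-1}$ are pairwise disjoint, and writes $Y_j=Z_j+E_j$. The cores $Z_j$ are disjointly supported, so $\|\sum_jb_jZ_j\|_r^r=\sum_j|b_j|^r\|Z_j\|_r^r$. The off-arc parts satisfy $\|E_j\|_r\lesssim2^{-\eta j(2-1/r)}$ by \eqref{eq:fejer-tail}, an $\ell^{r'}$-summable sequence absorbed by the triangle inequality and H\"older.

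You instead keep every block centred at $\theta=0$ and rerun the dyadic-annulus convolution argument of Lemma~\ref{lem:packets}. Your exponents $2^{-2(k-j)/p}$ for $j\le k$ and $2^{-(j-k)(2-2/p)}$ for $j>k$, and the cancellation $2^{-k}2^{2kr/p}=1$, are right.

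What each route buys:
\begin{itemize}
\item Yours dispenses with the arc selection, the parameter $\eta$ and the Fej\'er tail estimate. It also shows that angular separation of the blocks is not needed.
\item The paper's exhibits the column as a disjointly supported $\ell^r$-embedding plus a small perturbation. This makes the non-decay of the coordinate tails transparent, and it uses only the concentration of each block rather than nested scales at a common centre.
\end{itemize}

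A quick cross-check of your boundedness step is available. Your coefficients are $\widehat\phi(n)=\lambda_n$ with $\lambda_n=c_j\mathbf 1_{[2^j,2^j+2^{j-1})}(n)$, and this sequence satisfies \eqref{eq:marc-condition}. Since $\mathcal Hf$ has coefficients $\int_0^1t^nf(t)\,dt$, you get $T_\phi f=M_\lambda(\mathcal Hf)$, where $\mathcal H$ is the classical Hilbert matrix operator. Lemma~\ref{lem:dyadic-marc} and the classical boundedness of $\mathcal H$ on $H^p$ then give boundedness at once.

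So your example is in effect a blockwise truncation of the Hilbert matrix. The paper's rotated blocks do not reduce to $\mathcal H$ through Lemma~\ref{lem:dyadic-marc} in this way, because their phases $e^{-i\nu\theta_j}$ have unbounded variation on the blocks.
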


\begin{proof}
Let
\[
 F_N(e^{i\theta})=\frac1{N+1}
 \left|\sum_{\nu=0}^Ne^{i\nu\theta}\right|^2.
\]
The standard Fej\'er-kernel estimates give
\[
 F_N(e^{i\theta})\lesssim
 \frac{N+1}{1+(N+1)^2\theta^2},
 \qquad
 \|F_N\|_r\asymp_r(N+1)^{1-1/r}.
 \tag{4.6}\label{eq:fejer-estimates}
\]
Set $c_N=\|F_N\|_r^{-1}$ and
\[
 H_N(z)=\sqrt{\frac{c_N}{N+1}}
 \sum_{\nu=0}^Nz^\nu,
 \qquad |H_N|^2=c_NF_N.
\]
Thus $\||H_N|^2\|_r=1$.  Moreover, for $1\le A\le N+1$,
integration of the first estimate in \eqref{eq:fejer-estimates} gives
\[
 \bigl\|c_NF_N\mathbf1_{\{|\theta|>A/(N+1)\}}\bigr\|_r
 \lesssim_r A^{-(2-1/r)}.
 \tag{4.7}\label{eq:fejer-tail}
\]

Choose $0<\eta<1$, put $N_j=2^{j-1}-1$ and $A_j=2^{\eta j}$.
The numbers $A_j/(N_j+1)$ form a summable sequence.  After increasing
$j_0$, choose pairwise disjoint arcs $I_j$ with those radii and centers
$e^{i\theta_j}$.  Define
\[
 K_j(z)=H_{N_j}(e^{-i\theta_j}z),
 \qquad Y_j=|K_j|^2,
 \qquad Z_j=Y_j\mathbf1_{I_j},
 \qquad E_j=Y_j-Z_j.
\]
The $Z_j$ have disjoint supports and $\|Z_j\|_r\to1$.  By
\eqref{eq:fejer-tail}, $(\|E_j\|_r)_j\in\ell^{r'}$.  Hence, for every finite $b$,
\[
 \left\|\sum_jb_jZ_j\right\|_r^r
 =\sum_j|b_j|^r\|Z_j\|_r^r
 \lesssim\|b\|_{\ell^r}^r,
\]
and
\[
 \left\|\sum_jb_jE_j\right\|_r
 \le\sum_j|b_j|\|E_j\|_r
 \le\|b\|_{\ell^r}
 \|(\|E_j\|_r)_j\|_{\ell^{r'}}.
 \tag{4.8}
\]
Thus $b\mapsto\sum_jb_jY_j$ is bounded from $\ell^r$ to $L^r$.
Its coordinate-tail norm does not tend to zero, because
$\|Y_j\|_r=1$ for every $j$.

Now set
\[
 \Delta_j\phi(z)=2^{j/p'}z^{2^j}K_j(z)
 \qquad(j\ge j_0),
\]
and set the earlier blocks to zero.  Since $\deg K_j=2^{j-1}-1$, every
displayed polynomial lies inside the $j$th hard block.  Its coefficients
grow at most polynomially in their actual frequency, so the block series
defines a holomorphic function on $\D$.  The normalized block is
$X_j=z^{2^j}K_j$, whence $|X_j|^2=Y_j$.  Theorems
\ref{thm:dyadic} and \ref{thm:exact}, together with the tail criterion,
show that $T_\phi$ is bounded but not compact.  Also
\[
 \|X_j\|_p^2=\||X_j|^2\|_r=\|Y_j\|_r=1,
\]
which proves \textup{(iv)} and the failure of
\eqref{eq:known-block-sufficient}.

Every coefficient of $K_j$ is nonzero, so the support contains an interval
of length $2^{j-1}$ in every sufficiently large block.  If this support
were an analytic $\Lambda(p)$-set, the shifted Dirichlet polynomials on
those intervals would have uniformly bounded $H^p/H^2$ ratio.  Instead,
for an interval of length $M$ that ratio is
$\asymp_pM^{1/2-1/p}\to\infty$.  This proves \textup{(ii)}--\textup{(iii)}.
\end{proof}

\subsection{Known regimes and an intrinsic Calder\'on symbol space}

\begin{corollary}[Recovery of known regimes and strict extension]
\label{cor:known-regimes}
Put
\[
 B_{2,p}(\phi)
 =|\phi(0)|+
 \left(\sum_{j\ge0}
  \bigl(2^{-j/p'}\|\Delta_j\phi\|_2\bigr)^t\right)^{1/t}.
 \tag{4.9}\label{eq:h2-block-condition}
\]
Then:
\begin{enumerate}[label=\textup{(\roman*)}]
 \item $\|T_\phi:H^p\to H^2\|\asymp_p B_{2,p}(\phi)$, and
 boundedness of $T_\phi:H^p\to H^p$ implies
 $B_{2,p}(\phi)<\infty$.
 \item If $\operatorname{supp}\widehat\phi$ is contained in an analytic
 $\Lambda(p)$-set $E\subset\mathbb N_0$, then
 \[
  T_\phi:H^p\to H^p\text{ is bounded}
  \iff B_{2,p}(\phi)<\infty,
 \]
 with
 \[
  \|T_\phi:H^p\to H^2\|
  \le\|T_\phi:H^p\to H^p\|
  \le C_{E,p}\|T_\phi:H^p\to H^2\|.
 \]
 In this sector boundedness is equivalent to compactness.
 \item Hadamard-lacunary symbols are a special case of \textup{(ii)}.
 For a one-frequency-per-block symbol the test $h=1$ exactly norms the
 vector criterion, up to constants depending only on $p$.
 \item The blockwise condition
 \[
  \bigl(2^{-j/p'}\|\Delta_j\phi\|_{H^p}\bigr)_j\in\ell^t
 \]
 is sufficient in general and is not necessary, even for symbols with
 a consecutive interval occupying half of every sufficiently large hard
 block.
\end{enumerate}
\end{corollary}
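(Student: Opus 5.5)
The plan is to treat the four items in order, using Theorem~\ref{thm:dyadic} with $u=2$ for the mixed-norm statement and the vector criterion of Theorem~\ref{thm:main} for everything else.

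\textbf{Item (i).} First note that the proof of Theorem~\ref{thm:dyadic} never used that input and output share the same exponent. The moment envelope (Lemma~\ref{lem:moment}) and the packets (Lemma~\ref{lem:packets}) live on the input side, in $H^p$. The Marcinkiewicz multiplier (Lemma~\ref{lem:dyadic-marc}) only has to act on the output side, here $H^2$. The same argument therefore gives
\[
 \|T_\phi:H^p\to H^2\|\asymp_p\|a\mapsto a_{-1}\phi(0)+\textstyle\sum_ja_j2^{-j/p'}\Delta_j\phi:\ell^p\to H^2\|.
\]
By orthogonality of the blocks in $H^2$, the squared output norm is $\sum_j|a_j|^2\|X_j\|_2^2$. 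Its supremum over the $\ell^p$ unit ball is the $\ell^{r'}$ norm of $(\|X_j\|_2^2)$, since $\ell^r$ duality applies to $(|a_j|^2)$. After taking square roots this is the $\ell^t$ norm of $(\|X_j\|_2)$, because $2r'=t$. Hence the norm is comparable to $B_{2,p}(\phi)$. The implication from $H^p\to H^p$ boundedness follows from $\|\cdot\|_2\le\|\cdot\|_p$. Alternatively, it is the test $h=1$ in \eqref{eq:main-norm-tail}.

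\textbf{Item (ii).} The lower bound is trivial. For the upper bound, every finite synthesis $\sum_ja_jX_j$ has spectrum inside $E$. So its $H^p$ norm is at most $C_{E,p}$ times its $H^2$ norm, and Theorem~\ref{thm:dyadic} with $u=p$ bounds $T_\phi$ on $H^p$ by a constant multiple of the $\ell^p\to H^2$ synthesis norm. As stated, this produces a factor $\asymp_p$ in front of $C_{E,p}$. To obtain the displayed inequality literally, I would compose the factorization $H^p\to H^2$ with the $\Lambda(p)$ inequality applied directly to the output $T_\phi f$, whose spectrum lies in $E$ up to the constant term. One small caveat must be checked: $0$ should be in $E$, or the constant term should be handled separately. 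Either way, the inequality then holds for all $f$ by density.

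For compactness in this sector, use item (i) applied to tails: $\|T_{\phi^{>N}}:H^p\to H^2\|\asymp\|(\|X_j\|_2)_{j>N}\|_{\ell^t}\to0$ whenever $B_{2,p}<\infty$. The $\Lambda(p)$ bound then transfers this to $H^p$, and (4.3) yields $\|T_\phi\|_{\rm e}=0$.

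\textbf{Item (iii).} Hadamard-lacunary sets are $\Lambda(p)$ for every $p$ (Zygmund), so they are covered by (ii). For one frequency per block, $|X_j|=|c_j|$ is constant on $\T$. Hence $\|X_jh\|_2=|c_j|\|h\|_2\le|c_j|\|h\|_t$, and the vector norm is at most $\|(c_j)\|_{\ell^t}$, which equals the value at $h=1$.

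\textbf{Item (iv).} Sufficiency goes through the positive column operator. By Minkowski's inequality in $L^r$,
\[
 \Bigl\|\sum_jb_j|X_j|^2\Bigr\|_r\le\sum_j|b_j|\|X_j\|_p^2\le\|b\|_{\ell^r}\,\|(\|X_j\|_p^2)\|_{\ell^{r'}}.
\]
The last factor is $\|(\|X_j\|_p)\|_{\ell^t}^2$, and \eqref{eq:positive-norm} concludes. Non-necessity is exactly Theorem~\ref{thm:dense-separation}.

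The only delicate step is checking that the constant in (ii) is as stated without an extra $p$-dependent factor. I expect that to be the main obstacle, and the plan is to resolve it by applying $\Lambda(p)$ directly to the output $T_\phi f$, as described above.
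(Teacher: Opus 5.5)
Your proposal is correct. For items (ii)--(iv) it follows the paper's proof essentially step for step:
\begin{itemize}
\item (ii): the $\Lambda(p)$ inequality applied to the output $T_\phi f$, and truncation of the vanishing $\ell^t$ tail for compactness;
\item (iii): the observation that $|X_j|$ is constant on $\T$ when there is one frequency per block;
\item (iv): the H\"older/Minkowski bound, together with Theorem~\ref{thm:dense-separation}.
\end{itemize}
In (iv) you run the bound on the positive column via \eqref{eq:positive-norm}, while the paper runs it on the embedding via $\|X_jh\|_2\le\|X_j\|_p\|h\|_t$. These are dual forms of one estimate.

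The genuine difference is item (i). The paper does not prove the $H^p\to H^2$ equivalence. It quotes it as the dyadic form of the theorem in \cite{NorrboPelaezWu2026}, and uses Theorem~\ref{thm:main} with $h=1$ only for necessity. You instead rerun the proof of Theorem~\ref{thm:dyadic} with input $H^p$ and output $H^2$, and this is legitimate:
\begin{itemize}
\item Lemmas~\ref{lem:moment} and~\ref{lem:packets} are used only with $u=p$, on the input side.
\item On the output side, the coefficient sequences $d$ and $\eta$ only need to be bounded, because a bounded coefficient sequence is a contractive multiplier of $H^2$. So Lemma~\ref{lem:dyadic-marc} is not even needed.
\item Orthogonality then computes the $\ell^p\to H^2$ synthesis norm as the $\ell^t$ norm of $(|\phi(0)|,\|X_0\|_2,\|X_1\|_2,\ldots)$.
\end{itemize}
This gives a self-contained proof of (i), with constants depending only on $p$. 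Those constants are therefore uniform over the tails $\phi^{>N}$, which is exactly what your compactness argument in (ii) uses. The paper's citation is shorter but rests on an external result.

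Two points to tighten in (ii).
\begin{itemize}
\item Your constant-term caveat is vacuous. The $n$th coefficient of $T_\phi f$ is $\widehat\phi(n)m_n$, so the spectrum of $T_\phi f$ lies in $\operatorname{supp}\widehat\phi\subseteq E$. In particular it contains $0$ only when $\widehat\phi(0)\neq0$, hence only when $0\in E$.
\item The passage to all $f$ is not a density argument in $f$: for polynomial $f$, $T_\phi f$ is in general not a polynomial. What you need is to extend the $\Lambda(p)$ inequality from polynomials to $H^2$ functions with spectrum in $E$. Apply it to the Taylor partial sums of $T_\phi f$; these converge in $H^2$, hence are Cauchy in $H^p$, and their $H^p$ limit is $T_\phi f$. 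This is the closure step the paper invokes.
\end{itemize}
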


\begin{proof}
The first norm equivalence is the dyadic form of the
$H^p\to H^2$ theorem in \cite{NorrboPelaezWu2026}; necessity also follows
from Theorem~\ref{thm:main} by testing $h=1$.  If the output spectrum lies
in $E$, the defining $\Lambda(p)$ inequality extends from polynomials by
closure and gives
$\|T_\phi f\|_p\le C_{E,p}\|T_\phi f\|_2$.  The reverse norm inequality
uses the contractive inclusion $H^p\hookrightarrow H^2$.  The
$\ell^t$ tail in \eqref{eq:h2-block-condition} tends to zero, so finite
block truncation proves compactness.

Hadamard-gap sets are analytic $\Lambda(p)$-sets; alternatively, in the
one-frequency-per-block case multiplication by the corresponding monomial
is an isometry on $H^2$, so the vector norm is the constant-test norm.
Finally, H\"older gives
$\|X_jh\|_2\le\|X_j\|_p\|h\|_t$, proving the blockwise sufficient
condition.  Its strict failure is Theorem~\ref{thm:dense-separation}.
\end{proof}

\begin{lemma}[Analytic allocation factorization]\label{lem:allocation}
For a sequence $Y=(Y_j)_{j\ge0}\subset H^p$, set
\[
 P_Yb=\sum_jb_j|Y_j|^2
\]
and
\[
 \mathfrak G_p(Y)
 =\inf_{Y_j=A_jF_j}
 \|A\|_{H^\infty(\ell^t)}
 \sup_j\|F_j\|_{H^p},
 \tag{4.10}\label{eq:analytic-factor-norm}
\]
where the product is coordinatewise analytic. With equality in
$[0,\infty]$,
\[
 \boxed{\mathfrak G_p(Y)
 =\|P_Y:\ell^r\to L^r\|^{1/2}.}
 \tag{4.11}\label{eq:allocation-bounded}
\]
If $P_Y$ is bounded, then
\[
 \boxed{
 \inf_{Y=AF}\|A\|_{H^\infty(\ell^t)}
       \limsup_{j\to\infty}\|F_j\|_{H^p}
 =\|P_Y\|_{\mathrm e}^{1/2},}
 \tag{4.12}\label{eq:allocation-essential}
\]
and $P_Y$ is compact if and only if one such factorization has
$\|F_j\|_{H^p}\to0$.  In all these infima one may require the $A_j$ to
be outer and zero-free and
\[
 \sum_j|A_j^*|^t=1\quad\text{a.e. on }\T.
 \tag{4.13}\label{eq:outer-allocation}
\]
No attainment of an infimum is asserted.
\end{lemma}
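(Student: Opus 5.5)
We prove the two inequalities in \eqref{eq:allocation-bounded} separately, then treat tails the same way.

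\emph{Factorization bounds $P_Y$.} Suppose $Y_j=A_jF_j$ with $\|A\|_{H^\infty(\ell^t)}\le a$ and $\sup_j\|F_j\|_p\le f$. For $G\in L^{r'}_+$, take the outer function $h$ with $|h|^2=G$ (after the $\varepsilon$-regularization used in Theorem~\ref{thm:exact}). By the positive adjoint computation of Theorem~\ref{thm:exact}, it suffices to bound $\sum_j\|Y_jh\|_2^t$. Since $1/2=1/p+1/t$, H\"older gives
\[
 \|Y_jh\|_2\le\|F_j\|_p\|A_jh\|_t\le f\,\|A_jh\|_t .
\]
Hence
\[
 \sum_j\|Y_jh\|_2^t\le f^t\int_\T\sum_j|A_j|^t|h|^t\,dm\le f^ta^t\|h\|_t^t .
\]
Positive duality then yields $\|P_Y\|^{1/2}\le af$, so $\|P_Y\|^{1/2}\le\mathfrak G_p(Y)$. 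This direction is routine.

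\emph{$P_Y$ bounds a factorization.} We need an outer allocation satisfying \eqref{eq:outer-allocation}. Assume $C^2=\|P_Y\|<\infty$ and fix a strictly positive weight $\beta\in\ell^r$ with $\|\beta\|_r=1$. The natural candidate comes from the extremal structure of the positive operator $P_Y:\ell^r\to L^r$, which is a Maurey/Pisier-type factorization. Its dual is $P_Y^*:L^{r'}\to\ell^{r'}$, so $\sum_j(\int|Y_j|^2G)^{r'}\le C^{2r'}\|G\|_{r'}^{r'}$.

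Applying a minimax (Ky Fan) argument to the concave--convex functional in $(G,\text{weights})$ produces a probability density $w$ on $\T$ with $\int|Y_j|^2w^{-1/r}\,dm$-type bounds. We will choose the exponents so that, with
\[
 |A_j|^t=\frac{\lambda_j|Y_j|^{t\theta}}{\sum_k\lambda_k|Y_k|^{t\theta}}
\]
for suitable $\lambda_j$, the quotient satisfies $\|Y_j/A_j\|_p\le C(1+\varepsilon)$.

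To make this concrete, we instead solve the extremal problem directly. Minimize $\sup_j\|Y_j|A_j|^{-1}\|_p$ over nonnegative measurable $a_j$ with $\sum a_j^t=1$. Its value equals the maximum over $b\in\ell^r_+$ and $G\in L^{r'}_+$ of the positive bilinear form in the dual of Theorem~\ref{thm:exact}. We then choose $A_j$ to be the outer function with modulus $a_j$, which is possible because $\log a_j\in L^1$ after an $\varepsilon$-perturbation $a_j\mapsto(a_j^t+\varepsilon\beta_j^t)^{1/t}$ followed by renormalization. The quotient $F_j=Y_j/A_j$ is then analytic, lies in $H^p$ since $1/A_j\in H^\infty$, and has modulus $|Y_j|/a_j$. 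This gives $\mathfrak G_p(Y)\le C(1+o(1))$. The infimum is reached only asymptotically, which matches the remark that no attainment is asserted.

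\emph{The minimax identity is the main obstacle.} We need
\[
 \inf_{\sum a_j^t=1}\sup_j\||Y_j|/a_j\|_p=\|P_Y\|^{1/2}.
\]
The first step is to rewrite $\sup_j\||Y_j|/a_j\|_p^2$ as
\[
 \sup_{\mu}\sum_j\mu_j\int|Y_j|^2a_j^{-2}\cdots
\]
using the $L^r$/$L^{r'}$ dualities, so that everything becomes convex in $(a_j^{-t})$ and linear in the dual variable. Ky Fan's theorem requires compactness, so we first truncate to finitely many $j$ and restrict to simple functions. We then pass to the limit in $N$ by monotonicity, which works because the truncated norms increase to $\|P_Y\|$.

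\emph{Essential-norm statement.} For \eqref{eq:allocation-essential} we apply the same two arguments to the tails $(Y_j)_{j>N}$. The standard argument of \S4 shows $\|P_Y\|_{\rm e}=\lim_N\|P_{Y^{>N}}\|$, since coordinate heads are finite rank and tails of $\ell^r$ inputs converge strongly to zero.

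For the upper bound, we glue a near-optimal tail factorization for $j>N$ with the trivial factorization $A_j=\varepsilon$, $F_j=Y_j/\varepsilon$ for $j\le N$ and renormalize. This does not change $\limsup_j\|F_j\|_p$ and costs only a factor $(1+\varepsilon^t N)^{1/t}$; we let the allocation of mass to the head tend to zero as well.

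For the lower bound, we apply the first step to the tail $j>N$ using $\sup_{j>N}\|F_j\|_p$ and let $N\to\infty$. The compactness criterion then follows immediately.
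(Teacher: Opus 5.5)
\paragraph{Overall.} Your first direction is sound; it is the dual form of the paper's pointwise H\"older estimate, using H\"older against outer tests together with the positive duality of Theorem~\ref{thm:exact}. The regularized outer lift and the head/tail gluing for \eqref{eq:allocation-essential} are also essentially the paper's. The gap is the step you flag as the main obstacle: producing, for each $\kappa>\|P_Y\|$, \emph{one} measurable allocation that works for every $j$ simultaneously.

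\paragraph{The change-of-density step.} Two things go wrong.
\begin{enumerate}
\item The minimax is set in the wrong variable. With $w_j=|Y_j|^2$ and $v_j=a_j^{-t}$, the cost $\||Y_j|/a_j\|_p^2=\|w_jv_j^{2/t}\|_r$ scales like $s^{2/t}$ along rays $v\mapsto sv$. Since $2/t<1$, it is not convex. The workable variable is $e_j=a_j^t$, with linear constraint $\sum_je_j=1$ and convex cost $\||Y_j|/a_j\|_p^p=\int w_j^re_j^{1-r}\,dm$. For a probability vector $\mu$, the pointwise minimizer $e_j\propto\mu_j^{1/r}w_j$ gives $\inf_e\sum_j\mu_j\int w_j^re_j^{1-r}\,dm=\|P_Y(\mu^{1/r})\|_r^r$. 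Ky Fan, with the finite simplex as the compact side, then settles finitely many $j$; no simple functions are needed.
\item More seriously, ``pass to the limit in $N$ by monotonicity'' proves nothing. The near-optimal allocations $e^{(N)}$ for $j\le N$ are unrelated to one another. Knowing $\|P_{Y^{\le N}}\|\uparrow\|P_Y\|$ does not produce a single allocation for the infinite column. You need a genuine compactness step. For instance: take diagonal weak-$*$ limits of $e^{(N)}_j$ in $L^\infty$; note that $\sum_je_j\le1$ is preserved; use weak lower semicontinuity of the convex functional $e\mapsto\int w_j^re^{1-r}\,dm$ (Fatou plus Mazur); and finally assign the missing mass to one coordinate.
\end{enumerate}
The paper avoids all of this via Howard--Schep's theorem. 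It gives a strictly positive $u\in\ell^r$ for the whole infinite column with $P_Y^*((P_Yu)^{r-1})_j\le\kappa^ru_j^{r-1}$, hence the explicit allocation $d_j=(u_jw_j/P_Yu)^{1/r'}$. This is precisely your abandoned ansatz with $\theta=2/t$ and $\lambda=u$.

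\paragraph{The compactness criterion.} This also does not ``follow immediately''. Since the infimum in \eqref{eq:allocation-essential} need not be attained, $\|P_Y\|_{\mathrm e}=0$ only yields factorizations with arbitrarily small $\limsup_j\|F_j\|_p$, not one with $\|F_j\|_p\to0$. A diagonal construction is needed. The paper chooses $N_m$ with $\|P_Y^{>N_m}\|\le2^{-2m}$, reweights the column by $2^m$ on the index blocks beyond $N_m$, and applies the bounded formula once. Alternatively, you can glue normalized factorizations $(A^{(m)},F^{(m)})$ with $\|F^{(m)}_j\|_p<2^{-m}$ for $j>N_m$. Scale $A^{(m)}$ by $c_m=1/m$ on the $m$th block; then $\sum_mc_m^t<\infty$ and $\|F_j\|_p\le m2^{-m}$ there.

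\paragraph{Minor repairs.} The regularizing vector must be an $\ell^t$ unit vector, not an $\ell^r$ one: for $p>6$ one has $t<r$, so $\ell^r\not\subset\ell^t$. You should also check $\sum_j|A_j(z)|^t\le1$ in $\D$ via outer Jensen, together with the strong $\ell^t$-valued analyticity of $A$.
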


\begin{proof}
Put $w_j=|Y_j^*|^2\in L^r$.  We first prove the measurable
change-of-density formula underlying the analytic statement.  Call
$d=(d_j)$ an allocation if $d_j\ge0$ and
$\sum_jd_j^{r'}=1$ a.e., and use the convention $0/0=0$.  We claim that
\[
 \inf_d\sup_j\|w_j/d_j\|_{L^r}=\|P_Y:\ell^r\to L^r\|.
\]
Indeed, for a finitely supported $b\ge0$, pointwise H\"older in the
counting variable gives
\[
 P_Yb
 =\sum_jd_jb_j\frac{w_j}{d_j}
 \le
 \left(\sum_jb_j^r\left(\frac{w_j}{d_j}\right)^r\right)^{1/r}.
\]
After integration,
\[
 \|P_Yb\|_r^r
 \le\sum_jb_j^r\|w_j/d_j\|_r^r,
\]
and positivity handles arbitrary complex $b$.  Thus the displayed
infimum is at least $\|P_Y\|$.

For the converse, fix $\kappa>\|P_Y\|$.  Theorem~4 of
Howard--Schep \cite{HowardSchep1990}, applied with equal exponents $r$
and with $\lambda=\kappa^r$, supplies a strictly positive
$u=(u_j)\in\ell^r$ such that
\[
 P_Y^*\bigl((P_Yu)^{r-1}\bigr)_j
 \le \kappa^r u_j^{r-1}\qquad(j\ge0).
\]
On $E^c=\{P_Yu>0\}$ define
\[
 d_j=\left(\frac{u_jw_j}{P_Yu}\right)^{1/r'}.
\]
On $E$ all the $w_j$ vanish a.e.; there we take any fixed strictly
positive $\ell^{r'}$-unit vector for $d$.  Then $d$ is an allocation,
and the preceding Howard--Schep inequality yields
\[
 \|w_j/d_j\|_r^r
 =u_j^{1-r}\int_\T w_j(P_Yu)^{r-1}\,dm
 \le\kappa^r.
\]
Letting $\kappa\downarrow\|P_Y\|$ proves
the claimed measurable change-of-density formula.  This also identifies precisely the
Howard--Schep result used here, rather than appealing to an abstract
factorization without displaying the resulting density.

Since $p=2r$ and $t=2r'$, putting $a_j=d_j^{1/2}$ transforms
that formula into
\[
 \|P_Y\|^{1/2}
 =\inf_{\substack{a_j\ge0\\\sum_ja_j^t=1}}
   \sup_j\|Y_j^*/a_j\|_{L^p}.
 \tag{4.14}\label{eq:measurable-allocation}
\]
Indeed,
$\|Y_j^*/a_j\|_p^2=\|w_j/d_j\|_r$.  If $P_Y$ is unbounded, any finite
factorization would give a finite upper bound for $P_Y$ by the first
H\"older estimate, so both sides of \eqref{eq:allocation-bounded} are
infinite.

Any analytic factorization in \eqref{eq:analytic-factor-norm} gives a
measurable suballocation
$|A_j^*|/\|A\|_{H^\infty(\ell^t)}$; completing the missing $t$-power
mass only increases denominators. This proves
$\|P_Y\|^{1/2}\le\mathfrak G_p(Y)$.

Conversely, choose an allocation in \eqref{eq:measurable-allocation},
a fixed strictly positive $\ell^t$-unit vector $\lambda$, and
$\varepsilon>0$. Put
\[
 \widetilde a_j
 =\frac{(a_j^t+\varepsilon^t\lambda_j^t)^{1/t}}
 {(1+\varepsilon^t)^{1/t}}.
 \tag{4.15}\label{eq:allocation-regularization}
\]
Then $\sum_j\widetilde a_j^t=1$,
$\widetilde a_j\ge a_j/(1+\varepsilon^t)^{1/t}$, and, for each fixed
$j$, $\widetilde a_j$ is bounded above and bounded away from zero.
Let $A_j$ be the outer function with boundary modulus
$\widetilde a_j$.  It is zero-free, $1/A_j\in H^\infty$, and
$F_j=Y_j/A_j\in H^p$. Outer Jensen and Tonelli give
\[
 \sum_j|A_j(z)|^t
 \le\mathcal P_z\left(\sum_j\widetilde a_j^t\right)=1.
\]
The map $A=(A_j)$ is strongly analytic as an $\ell^t$-valued map.  To
see the only nonformal point, for $0<R<1$ outer Jensen gives
\[
 \sup_{|z|\le R}\sum_{j>J}|A_j(z)|^t
 \le C_R\int_\T\sum_{j>J}\widetilde a_j^t\,dm\longrightarrow0.
\]
Thus the finite-coordinate analytic maps converge locally uniformly to
$A$.  Since countably many scalar outer functions have their prescribed
radial limits simultaneously off one null set,
\eqref{eq:outer-allocation} holds.  Moreover,
\[
 \|F_j\|_{H^p}
 \le(1+\varepsilon^t)^{1/t}\|Y_j^*/a_j\|_{L^p}.
\]
Letting the allocation error and then $\varepsilon$ tend to zero proves
\eqref{eq:allocation-bounded}.

We next prove the assertions involving the essential norm.  Write
$E_N$ for the projection onto the first $N+1$ coordinates.  The
positive-column tail identity is
\[
 \|P_Y\|_{\mathrm e}
 =\lim_{N\to\infty}\|P_Y^{>N}\|.
 \tag{4.16}\label{eq:positive-tail-essential}
\]
The inequality ``$\le$'' follows by deleting the finite-rank head.
For the reverse, if $K:\ell^r\to L^r$ is compact, then
$K^*:L^{r'}\to\ell^{r'}$ is compact.  Coordinate tails converge
uniformly on compact subsets of $\ell^{r'}$, whence
\[
 \|K(I-E_N)\|=\|(I-E_N)K^*\|\longrightarrow0.
\]
It follows that
$\lim_N\|P_Y(I-E_N)\|\le\|P_Y-K\|$; taking the infimum over $K$
proves \eqref{eq:positive-tail-essential}.

The same allocation argument gives the exact measurable tail formula
\[
 \inf_d\limsup_{j\to\infty}\|w_j/d_j\|_r
 =\|P_Y\|_{\mathrm e}.
 \tag{4.16a}\label{eq:measurable-essential}
\]
For any allocation $d$, the first H\"older estimate applied to indices
$j>N$ gives
$\|P_Y^{>N}\|\le\sup_{j>N}\|w_j/d_j\|_r$, proving one inequality.
Conversely, fix $N$ and choose, by the measurable change-of-density
formula above, an
allocation $d^{\rm tail}$ on $j>N$ whose cost is within $\eta$ of
$\|P_Y^{>N}\|$.  Choose a strictly positive allocation
$h=(h_0,\ldots,h_N)$ on the finite head.  For $0<\delta<1$ put
\[
 d_j=\delta h_j\quad(j\le N),\qquad
 d_j=(1-\delta^{r'})^{1/r'}d_j^{\rm tail}\quad(j>N).
\]
This is a global allocation, and its limsup cost is at most
$(1-\delta^{r'})^{-1/r'}(\|P_Y^{>N}\|+\eta)$.  First let
$N\to\infty$ and then $\eta,\delta\downarrow0$ to obtain the reverse
inequality in \eqref{eq:measurable-essential}.

Passing from $d_j$ to $a_j=d_j^{1/2}$, and then using the same
regularized outer lift, turns \eqref{eq:measurable-essential} into
\eqref{eq:allocation-essential}.  The multiplicative loss
$(1+\varepsilon^t)^{1/t}$ is uniform in $j$, so the lift preserves the
limsup after $\varepsilon\downarrow0$.

It remains to prove the attained vanishing assertion in the compact
case.  If $P_Y$ is compact, choose
$N_1<N_2<\cdots$ so that
$\|P_Y^{>N_m}\|\le2^{-2m}$ for $m\ge1$.  Set
\[
 I_0=\{0,\ldots,N_1\},\qquad
 I_m=\{N_m+1,\ldots,N_{m+1}\}\quad(m\ge1),
\]
and define a new positive column by $B_j=w_j$ on $I_0$ and
$B_j=2^mw_j$ on $I_m$.  If
$x_m=\|b\mathbf1_{I_m}\|_{\ell^r}$, then
\[
 \left\|\sum_jb_jB_j\right\|_r
 \le\|P_Y\|x_0+\sum_{m\ge1}2^m
       \|P_Y^{>N_m}\|x_m
 \le\|P_Y\|x_0+\sum_{m\ge1}2^{-m}x_m
 \lesssim_r\|b\|_{\ell^r}.
\]
Thus the measurable bounded formula supplies an allocation $d$ for
$B$ with uniformly bounded costs.  For the original column,
\[
 \|w_j/d_j\|_r\lesssim2^{-m}\qquad(j\in I_m),
\]
so the outer lift gives one analytic factorization with
$\|F_j\|_{H^p}\to0$.  Conversely, if $Y=AF$ and
$\|F_j\|_p\to0$, the first H\"older estimate gives
\[
 \|P_Y^{>N}\|
 \le\|A\|_{H^\infty(\ell^t)}^2
       \sup_{j>N}\|F_j\|_p^2\longrightarrow0.
\]
Then \eqref{eq:positive-tail-essential} makes $P_Y$ compact.  This
proves all the claims.
\end{proof}

\begin{definition}[Analytic Calder\'on symbol space]\label{def:calderon}
For an admissible resolution $\sigma$, put
\[
 \mathcal D_{p,\sigma}\phi
 =\bigl(2^{-j/p'}W_j^\sigma\phi\bigr)_{j\ge0}
 \tag{4.17}\label{eq:calderon-transform}
\]
and define
\[
\begin{split}
 \mathfrak C_p^\sigma
 &=\{\phi\in\Hol(\D):
     \mathfrak G_p(\mathcal D_{p,\sigma}\phi)<\infty\},\\
 \|\phi\|_{\mathfrak C_p^\sigma}
 &=|\phi(0)|+\mathfrak G_p(\mathcal D_{p,\sigma}\phi).
\end{split}
 \tag{4.18}\label{eq:calderon-norm}
\]
\end{definition}

\begin{theorem}[Intrinsic Calder\'on space and its little space]
\label{thm:calderon}
The set in Definition~\ref{def:calderon} is independent of the admissible
resolution, and all its norms are equivalent.  If
$\mathfrak M_p=(\mathcal R_p,H^p)$ denotes the Guo--Tang Hilbert-range
coefficient-multiplier space, then
\[
 \boxed{\mathfrak C_p^\sigma=\mathfrak M_p}
 \quad\text{with equivalent norms.}
 \tag{4.19}\label{eq:calderon-multiplier}
\]
Thus the common space may be denoted by $\mathfrak C_p$.

Its little space has the resolution-independent descriptions
\[
\boxed{
\begin{aligned}
 \mathfrak C_{p,0}
 &:=\overline{\operatorname{Poly}}^{\,\mathfrak C_p}\\
 &=\{\phi:\mathcal D_{p,\sigma}\phi
     \in H^\infty(\ell^t)\odot c_0(H^p)\}\\
 &=\{\phi\in\mathfrak C_p:
     \|\phi_\rho-\phi\|_{\mathfrak C_p}\to0
     \text{ as }\rho\uparrow1\},
\end{aligned}}
 \tag{4.20}\label{eq:calderon-little}
\]
where $\phi_\rho(z)=\phi(\rho z)$ and $\odot$ denotes coordinatewise
analytic product.  Finally, for $\phi=g'$,
\[
\begin{split}
 \mathcal H_g:H^p\to H^p\text{ is bounded}
 &\iff\phi\in\mathfrak C_p,\\
 \mathcal H_g\text{ is compact}
 &\iff\phi\in\mathfrak C_{p,0},
\end{split}
 \tag{4.21}\label{eq:calderon-hilbert}
\]
and
\[
 \|\mathcal H_g\|_{\mathrm e}
 \asymp_p
 \|\phi+\mathfrak C_{p,0}\|_{\mathfrak C_p/\mathfrak C_{p,0}}.
 \tag{4.22}\label{eq:calderon-quotient}
\]
\end{theorem}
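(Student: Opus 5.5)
The plan is to reduce every assertion to the positive column operator and apply Lemma~\ref{lem:allocation} with $Y=\mathcal D_{p,\sigma}\phi$.

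\emph{Resolution independence and identification with $\mathfrak M_p$.} With $Y=\mathcal D_{p,\sigma}\phi$, Lemma~\ref{lem:allocation} gives $\mathfrak G_p(\mathcal D_{p,\sigma}\phi)=\|P_Y\|^{1/2}$. Littlewood--Paley equivalence (2.3) is available only for hard blocks. For smooth blocks we use instead the observation that $P_Y$ is positive, so $\|P_Y\|^{1/2}$ is comparable to the norm of $a\mapsto\sum_j a_j Y_j$ on $\ell^p\to H^p$. To see this, randomize signs: Khintchine's inequality together with the finite-overlap property~(a) lets us split $j$ into $2L+1$ colour classes. Within each class the blocks are supported on disjoint unions of hard blocks, so (2.3) applies to the grouped hard pieces. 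This gives $\|P_Y\|^{1/2}\asymp_{p,L}\|\mathscr S^{\sigma,\ne}_{\phi,p}\|$, where $\mathscr S^{\sigma,\ne}_{\phi,p}$ denotes the nonconstant synthesis. Theorem~\ref{thm:resolution} then compares this quantity with the hard synthesis, and Theorem~\ref{thm:dyadic} compares it with $\|T_\phi\|$. Hence $\|\phi\|_{\mathfrak C_p^\sigma}\asymp\|T_\phi\|$ for every admissible $\sigma$. The Guo--Tang characterization states that $T_\phi$ is bounded on $H^p$ iff $\phi\in\mathfrak M_p$, with $\|T_\phi\|\asymp\|\phi\|_{\mathfrak M_p}$ because $T_\phi f=\phi*\mathcal Hf$ and $\mathcal H:H^p\to\mathcal R_p$ is onto with the quotient norm. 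This proves (4.19) and the first line of (4.21). The same comparison applied to tails, via (3.11) and (4.16), yields $\|P_Y\|_{\rm e}^{1/2}\asymp\|T_\phi\|_{\rm e}$.

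\emph{Little space.} Let $\mathcal K$ be the set of $\phi$ with $T_\phi$ compact; by (3.12) and (4.16) this is exactly the set where $P_Y$ is compact. By Lemma~\ref{lem:allocation}, $P_Y$ is compact iff $\mathcal D_{p,\sigma}\phi\in H^\infty(\ell^t)\odot c_0(H^p)$, which proves the second description. Next, polynomials lie in $\mathcal K$, since they have finitely many blocks, and $\mathcal K$ is closed because $\|\phi\|_{\mathfrak C_p}\asymp\|T_\phi\|$ and compact operators form a closed set. Hence $\overline{\operatorname{Poly}}\subset\mathcal K$. For the reverse inclusion we show that $\phi\in\mathcal K$ implies $\|\phi_\rho-\phi\|\to0$, which also places $\phi$ in the closure of polynomials, because $\phi_\rho$ is analytic past the closed disc and hence a norm limit of its Taylor polynomials (its blocks decay geometrically). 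Write $\phi_\rho-\phi=(\phi_\rho-\phi)^{\le N}+(\phi_\rho-\phi)^{>N}$. The head tends to zero as $\rho\to1$ for fixed $N$. For the tail, $\Delta_j\phi_\rho=M_{\rho^n}\Delta_j\phi$, and $(\rho^n)$ satisfies (2.1) uniformly in $\rho$. Passing through the multiplier in the synthesis form gives $\|(\phi_\rho)^{>N}\|\lesssim\|\phi^{>N}\|$ uniformly in $\rho$, and the latter tends to zero by compactness. Conversely, if $\|\phi_\rho-\phi\|\to0$, then $\phi$ is a limit of the functions $\phi_\rho$, each of which lies in $\overline{\operatorname{Poly}}$. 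This gives the three equal descriptions in (4.20) and the second line of (4.21).

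\emph{Quotient essential norm, and the main obstacle.} For (4.22), the bound $\|\phi+\mathfrak C_{p,0}\|\gtrsim\|T_\phi\|_{\rm e}$ follows because $\psi\in\mathfrak C_{p,0}$ makes $T_\psi$ compact, so $\|T_\phi\|_{\rm e}\le\|T_{\phi-\psi}\|\lesssim\|\phi-\psi\|$. For the reverse bound, take $\psi=\phi^{\le N}$, a polynomial, so that $\|\phi+\mathfrak C_{p,0}\|\le\|\phi^{>N}\|\asymp\|T_{\phi^{>N}}\|$; the right side tends to something comparable to $\|T_\phi\|_{\rm e}$ by (4.3). I expect the main obstacle to be the smooth-resolution Littlewood--Paley comparison in the first step, namely passing from $\|\sum a_jY_j\|_p$ to the square function when the $Y_j$ overlap on neighbouring hard blocks. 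I would handle it first by the colouring argument combined with the patched multipliers $m_\ell$, exactly as in the proof of Theorem~\ref{thm:resolution}.
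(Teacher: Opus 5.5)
Your proposal is correct. For resolution independence, the identification $\mathfrak C_p^\sigma=\mathfrak M_p$, the $H^\infty(\ell^t)\odot c_0(H^p)$ description, and both halves of (4.22), it is the paper's argument. Khintchine plus colouring modulo $2L+1$ gives $\|\mathscr S^\sigma_{\phi,p}\|\asymp_{p,L}|\phi(0)|+\|P_Y\|^{1/2}$, uniformly on tails. Then Lemma~\ref{lem:allocation}, Theorems~\ref{thm:resolution} and~\ref{thm:dyadic}, the finite-rank/closedness argument, and the choice $\psi=S_N\phi$ in the quotient finish those parts.

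One correction of emphasis. In the colouring step, the multipliers that do the work are not the patched $m_\ell$. They are the sign patterns constant on each grouped support interval $\bigcup_{|k-j|\le L}I_k$ of a single colour. These have $\mathfrak M\le3$ in Lemma~\ref{lem:dyadic-marc}, and that is what justifies your use of (2.3) for the grouped blocks.

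The genuine difference is the radial line of (4.20). The paper writes $T_{\phi_\rho}=D_\rho T_\phi$ and uses compactness of $D_\rho$, together with the already proved equality of the compact-symbol class and $\overline{\operatorname{Poly}}$, to get $\phi_\rho\in\mathfrak C_{p,0}$. It then obtains $\|\phi_\rho-\phi\|_{\mathfrak C_p}\to0$ on $\overline{\operatorname{Poly}}$ by uniform boundedness of dilations plus density.

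You instead prove that compactness implies radial continuity directly, by splitting at $S_N\phi$. The head is finite dimensional. The tail is controlled uniformly in $\rho$ because $(\rho^n)$ has $\mathfrak M\le2$ (or simply because $D_\rho$ is an $H^p$ contraction), so $\|(\phi^{>N})_\rho\|_{\mathfrak C_p}\lesssim\|\phi^{>N}\|_{\mathfrak C_p}\to0$ by the hard-tail criterion. You then place $\phi_\rho$ in $\overline{\operatorname{Poly}}$ by Taylor truncation. This is the same uniform-dilation mechanism applied to the specific approximants $S_N\phi$. It makes the chain compact $\subset$ radial $\subset\overline{\operatorname{Poly}}\subset$ compact independent of the density step and of compactness of $D_\rho$.

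The price is a block estimate you should state explicitly. Writing $\Delta_j\phi=z^{2^j}g$ gives $\|\Delta_j\phi_\rho\|_p\le\rho^{2^j}\|\Delta_j\phi\|_p$. Together with $2^{-j/p'}\|\Delta_j\phi\|_p\le\|\mathscr S_{\phi,p}\|$ this yields $\|\phi_\rho-S_N\phi_\rho\|_{\mathfrak C_p}\lesssim\|\mathscr S_{\phi,p}\|\sum_{j>N}\rho^{2^j}\to0$.
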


\begin{proof}
Let $Y_j=2^{-j/p'}W_j^\sigma\phi$.  Boundary Khintchine gives, for
finite $a$,
\[
 \left\|\left(\sum_j|a_jY_j|^2\right)^{1/2}\right\|_p
 \lesssim_p
 \|\mathscr S_{\phi,p}^\sigma\|\,\|a\|_{\ell^p}.
\]
For the reverse inequality, colour the $j$'s modulo $2L+1$.  In one
colour their hard-scale supports are disjoint, and arbitrary signs on
those support intervals are uniformly bounded $H^p$ multipliers by
Lemma~\ref{lem:dyadic-marc}.  Khintchine in the reverse direction and
summation over the colours therefore give
\[
 \|\mathscr S_{\phi,p}^\sigma\|
 \asymp_{p,L}
 |\phi(0)|+\|P_Y:\ell^r\to L^r\|^{1/2}.
 \tag{4.23}\label{eq:smooth-square-column}
\]
The same proof, with the same constants, applies to every coordinate
tail.

Lemma~\ref{lem:allocation}, Theorem~\ref{thm:resolution}, and
Theorem~\ref{thm:dyadic} now give
\[
 \|T_\phi:H^p\to H^p\|
 \asymp_{p,L,M}\|\phi\|_{\mathfrak C_p^\sigma}.
\]
Guo--Tang identify the same operator norm with the multiplier norm on
$\mathfrak M_p$, proving \eqref{eq:calderon-multiplier}, completeness,
and resolution independence.

We now verify each description of the little space.  By the tail form of
\eqref{eq:smooth-square-column}, Theorem~\ref{thm:resolution}, and
Lemma~\ref{lem:allocation},
\[
 T_\phi\text{ is compact}
 \iff \|P_Y^{>N}\|\longrightarrow0
 \iff Y=AF\text{ for some }
 A\in H^\infty(\ell^t),\ F\in c_0(H^p).
 \tag{4.24}\label{eq:little-factor-compact}
\]
Let $S_N\phi$ retain the constant coefficient and the first $N+1$ hard
dyadic blocks.  It is a polynomial.  If $T_\phi$ is compact, the hard
tail criterion and the norm equivalences above give
\[
 \|\phi-S_N\phi\|_{\mathfrak C_p}\longrightarrow0.
 \tag{4.25}\label{eq:little-polynomial-approximation}
\]
Conversely, polynomial symbols induce finite-rank operators, while the
compact operators are norm closed.  The equivalence of the
$\mathfrak C_p$ norm and the $T_\phi$ operator norm therefore shows that
the compact-symbol class is precisely
$\overline{\operatorname{Poly}}^{\mathfrak C_p}$.  Together with
\eqref{eq:little-factor-compact}, this proves the first two lines of
\eqref{eq:calderon-little} and also their resolution independence.

For radial dilations, let $D_\rho f(z)=f(\rho z)$.  Then
$T_{\phi_\rho}=D_\rho T_\phi$.  For fixed $0<\rho<1$, $D_\rho$ is compact
on $H^p$: if $f(z)=\sum_na_nz^n$ and $\|f\|_p\le1$, then
$|a_n|\le1$, and hence
\[
 \left\|D_\rho f-\sum_{n=0}^N\rho^na_nz^n\right\|_p
 \le\sum_{n>N}\rho^n
 =\frac{\rho^{N+1}}{1-\rho}.
\]
Thus every $\phi_\rho$ belongs to $\mathfrak C_{p,0}$.  Moreover the
maps $\phi\mapsto\phi_\rho$ are uniformly bounded on $\mathfrak C_p$,
because $D_\rho$ is contractive on $H^p$ and the $\mathfrak C_p$ and
operator norms are equivalent.  They converge in norm on polynomials,
so by density they converge on $\mathfrak C_{p,0}$.  Conversely, if
$\|\phi_\rho-\phi\|_{\mathfrak C_p}\to0$, then $\phi$ is a norm limit of
the elements $\phi_\rho\in\mathfrak C_{p,0}$; closedness gives
$\phi\in\mathfrak C_{p,0}$.  This proves the radial line of
\eqref{eq:calderon-little}.

It remains to prove both directions of the quotient essential-norm
estimate.  If $\psi\in\mathfrak C_{p,0}$, then $T_\psi$ is compact, so
\[
 \|T_\phi\|_{\mathrm e}
 \le\|T_{\phi-\psi}\|
 \lesssim_p\|\phi-\psi\|_{\mathfrak C_p}.
\]
Taking the infimum over $\psi$ gives one inequality in
\eqref{eq:calderon-quotient}.  In the other direction,
$S_N\phi\in\mathfrak C_{p,0}$, and the hard/smooth tail comparisons and
Theorem~\ref{thm:main} give
\[
 \begin{split}
 \|\phi+\mathfrak C_{p,0}\|_{\mathfrak C_p/\mathfrak C_{p,0}}
 &\le\liminf_{N\to\infty}
      \|\phi-S_N\phi\|_{\mathfrak C_p}\\
 &\lesssim_p\lim_{N\to\infty}
      \|\mathscr S_{\phi,p}^{\sigma,>N}\|
 \lesssim_p\|T_\phi\|_{\mathrm e}.
 \end{split}
\]
This proves \eqref{eq:calderon-quotient} without assuming that a nearest
compact operator is itself a generalized Hilbert operator.
\end{proof}

\begin{remark}
The positive change-of-density principle behind
\eqref{eq:measurable-allocation} is classical, and analytic
regularization of Banach-function-space factorizations is developed in
\cite{LesnikMaligrandaMleczko2020}.  The contribution here is
its explicit analytic, countable-coordinate realization for the
generalized-Hilbert dyadic columns and its resolution-independent
bounded/compact symbol-space consequences.  No priority claim is made for
the abstract factorization mechanism.
\end{remark}

\section{Finite Poisson mixtures exactly norm the embedding}

\begin{lemma}[Poisson-mixture approximation]\label{lem:density}
If $W\in L^1_+$ and $\int W\,dm=1$, finite convex Poisson mixtures $Q_n$
can be chosen so that
\[
 \|Q_n-W\|_1\to0,\qquad
 \|Q_n^\alpha-W^\alpha\|_{1/\alpha}\to0.
 \tag{5.1}
\]
\end{lemma}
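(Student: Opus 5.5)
The plan is to use the Poisson integral as an approximate identity and discretize it. Given $W\in L^1_+$ with $\int W\,dm=1$, I would approximate in two steps, and then derive the $L^{1/\alpha}$ statement for $\alpha$th powers from the $L^1$ statement.

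\emph{Step 1 (radial smoothing).} For $0<\rho<1$ put $W_\rho=\mathcal P[W](\rho\,\cdot)$, that is,
\[
 W_\rho(\zeta)=\int_\T P_{\rho\xi}(\zeta)W(\xi)\,dm(\xi).
\]
Because $P_{\rho\xi}(\zeta)=P_\rho(\overline\xi\zeta)$, this is a continuous convex mixture of Poisson kernels. Standard approximate-identity theory gives $\|W_\rho-W\|_1\to0$ as $\rho\uparrow1$. Each $W_\rho$ is positive and continuous and has integral one.

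\emph{Step 2 (discretization).} For fixed $\rho$, the map $\xi\mapsto P_{\rho\xi}$ is uniformly continuous from $\T$ into $C(\T)$. Partition $\T$ into $n$ arcs $J_\nu$ of small length, choose $\xi_\nu\in J_\nu$, and set
\[
 Q=\sum_\nu c_\nu P_{\rho\xi_\nu},\qquad c_\nu=\int_{J_\nu}W\,dm .
\]
The weights satisfy $c_\nu\ge0$ and $\sum_\nu c_\nu=1$, so $Q$ is a finite convex Poisson mixture with atoms $a_\nu=\rho\xi_\nu\in\D$. Moreover
\[
 \|Q-W_\rho\|_\infty\le\sup_{|\xi-\xi'|\le\operatorname{mesh}}\|P_{\rho\xi}-P_{\rho\xi'}\|_\infty\to0.
\]
A diagonal choice of $\rho_n\uparrow1$ and mesh $\to0$ then gives $Q_n$ with $\|Q_n-W\|_1\to0$.

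\emph{Step 3 (powers).} Here $0<\alpha<1$, and I would use the elementary inequality $|x^\alpha-y^\alpha|\le|x-y|^\alpha$ for $x,y\ge0$. It gives
\[
 \int_\T|Q_n^\alpha-W^\alpha|^{1/\alpha}\,dm\le\int_\T|Q_n-W|\,dm,
 \qquad\text{so}\qquad
 \|Q_n^\alpha-W^\alpha\|_{1/\alpha}\le\|Q_n-W\|_1^{\alpha}\to0.
\]
This follows from subadditivity of the concave function $x\mapsto x^\alpha$ together with $x^\alpha\le(x-y)^\alpha+y^\alpha$.

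\emph{Main obstacle.} No step is deep; the only point needing care is the uniform continuity in Step 2. It holds because $\rho<1$ is fixed, which keeps the kernels bounded by $(1+\rho)/(1-\rho)$ and Lipschitz in $\xi$. One must therefore discretize before sending $\rho\uparrow1$, which is what the diagonal argument arranges.
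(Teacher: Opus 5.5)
Your proposal is correct and follows essentially the same route as the paper: you write $P_\rho*W$ as a continuous convex mixture of the kernels $P_{\rho\eta}$, discretize it using the continuity of $\eta\mapsto P_{\rho\eta}$ into $C(\T)$, send $\rho\uparrow1$ diagonally, and transfer convergence to the powers via $|x^\alpha-y^\alpha|\le|x-y|^\alpha$. Your version also spells out the weights $c_\nu=\int_{J_\nu}W\,dm$ and the resulting uniform bound on $\|Q-W_\rho\|_\infty$, which the paper leaves implicit.
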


\begin{proof}
For $0<\rho<1$,
$P_\rho*W=\int_\T P_{\rho\eta}(\,\cdot\,)W(\eta)\,dm(\eta)$.
Continuity of $\eta\mapsto P_{\rho\eta}$ into $C(\T)$ allows uniform
approximation of this Bochner integral by finite convex combinations.
Let $\rho\uparrow1$ diagonally. The second convergence follows from
$|x^\alpha-y^\alpha|\le|x-y|^\alpha$.
\end{proof}

\begin{proof}[Proof of Theorem~\ref{thm:atomic}]
Every $h\in\mathcal A_K$ is an $H^t$ unit vector, giving one inequality.
Conversely, replace a normalized nonzero $h\in H^t$ by its outer factor
and put $W=|h|^t$. Let $|h_n|^t=Q_n$, with $Q_n$ from
Lemma~\ref{lem:density}. Then $|h_n|^2\to|h|^2$ in
$L^{1/\alpha}=L^{p/(p-2)}$. Since $|X_j|^2\in L^{p/2}$,
\[
 \|X_jh_n\|_2^2\to\|X_jh\|_2^2
\]
for each $j$. Apply this first to a finite coordinate set and then enlarge
the set. This proves \eqref{eq:atomic-exact}, including the infinite case. Restricting
all coordinates to $j>N$ proves the tail identity.
\end{proof}

\section{No fixed finite test level suffices}

\begin{lemma}[Poisson kernels away from their angular cell]
\label{lem:poisson-cell}
Partition $\T$ into $N\ge2$ half-open arcs $I_0,\ldots,I_{N-1}$ of
equal normalized length $1/N$.  For each $a\in\D\setminus\{0\}$ mark
the cell containing $a/|a|$ and its two circular neighbors.  If $I$ is
unmarked for each of $a_1,\ldots,a_K$, then, for nonnegative $c_\nu$,
\[
 \sup_I\sum_{\nu=1}^Kc_\nu P_{a_\nu}
 \le C\inf_I\sum_{\nu=1}^Kc_\nu P_{a_\nu},
 \tag{6.1}\label{eq:poisson-cell-harnack}
\]
where $C$ is absolute.  Kernels with $a_\nu=0$ may be included without
marking any cell. Consequently, if $0<\beta\le1$ and
$Q=\sum_\nu c_\nu P_{a_\nu}$, then
\[
 \sup_I Q^\beta\le C N\int_IQ^\beta\,dm.
 \tag{6.2}\label{eq:poisson-cell-average}
\]
\end{lemma}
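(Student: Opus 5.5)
The plan is to prove a single-kernel Harnack statement on unmarked cells with an absolute constant. The sum then follows termwise, and the average inequality follows by a short integration argument.

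\textbf{Single kernel.} Fix $a=|a|e^{i\theta_a}\ne0$ and an unmarked cell $I$. Write $\delta=1-|a|$. Use the elementary comparison
\[
|1-\overline a\zeta|^2\asymp \delta^2+|\theta-\theta_a|^2,
\]
where $\zeta=e^{i\theta}$ and $|\theta-\theta_a|\le\pi$ is the circular distance; this gives $P_a(\zeta)\asymp(1-|a|^2)/(\delta^2+|\theta-\theta_a|^2)$ with absolute constants. Since $I$ is neither the cell of $a/|a|$ nor one of its neighbours, every point $\theta\in I$ satisfies $|\theta-\theta_a|\ge2\pi/N$. The arc $I$ has angular length $2\pi/N$, so for any two points $\theta,\theta'\in I$,
\[
|\theta'-\theta_a|\le|\theta-\theta_a|+2\pi/N\le2|\theta-\theta_a|.
\]
Hence $\delta^2+|\theta-\theta_a|^2$ varies by at most a factor $4$ across $I$. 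Combined with the comparison constants, this shows $\sup_IP_a\le C\inf_IP_a$ with $C$ absolute.

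If $a=0$, then $P_a\equiv1$ and the bound is trivial, which is why such kernels need not mark any cell.

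\textbf{Summing.} For a nonnegative combination, bound each term separately:
\[
\sup_I\sum_\nu c_\nu P_{a_\nu}\le\sum_\nu c_\nu\sup_IP_{a_\nu}\le C\sum_\nu c_\nu\inf_IP_{a_\nu}\le C\inf_I\sum_\nu c_\nu P_{a_\nu}.
\]
This is (6.1).

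\textbf{Average inequality.} Raising (6.1) to the power $\beta\in(0,1]$ gives $\sup_IQ^\beta\le C^\beta\inf_IQ^\beta\le C\inf_IQ^\beta$, using $C\ge1$. Since $m(I)=1/N$, we have $\inf_IQ^\beta\le N\int_IQ^\beta\,dm$, and (6.2) follows.

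\textbf{Main obstacle.} The only real work is the uniform comparison $|1-\overline a\zeta|^2\asymp\delta^2+|\theta-\theta_a|^2$ with absolute constants. I would derive it from the identity
\[
|1-\overline a\zeta|^2=(1-|a|)^2+4|a|\sin^2\bigl((\theta-\theta_a)/2\bigr).
\]
The upper bound is immediate. For the lower bound I would treat $|a|\ge1/2$, where $4|a|\sin^2\ge\tfrac{2}{\pi^2}|\theta-\theta_a|^2$, separately from $|a|<1/2$, where the left side is at least $1/4$ and the right side is at most $1+\pi^2$.

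The open/closed endpoint conventions for the half-open arcs do not affect suprema or infima of these continuous functions. Positivity of $N\ge2$ is only needed to make sense of neighbours. When $N\le3$ the marked cells cover everything and the statement is vacuous.
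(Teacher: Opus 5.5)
Your proof is correct and follows essentially the same route as the paper: a single-kernel Harnack comparison on an unmarked cell from the identity $|1-\overline a\zeta|^2=(1-|a|)^2+4|a|\sin^2((\theta-\theta_a)/2)$, summed termwise with nonnegative coefficients, then raised to the power $\beta$ and averaged using $m(I)=1/N$. The only differences are cosmetic: you use a global comparison with $(1-|a|)^2+|\theta-\theta_a|^2$ (splitting on $|a|$) together with the sharp separation $2\pi/N$, whereas the paper splits on whether the angular distance is below $\pi/4$.
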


\begin{proof}
Write $a=\rho e^{i\psi}$ and use circular angular distance
$d(\theta,\psi)\in[0,\pi]$.  If $I$ is not one of the three marked
cells, then for $e^{i\theta},e^{i\vartheta}\in I$,
\[
 d(\theta,\psi)\ge\frac{c}{N},
 \qquad
 |d(\theta,\psi)-d(\vartheta,\psi)|\le\frac{2\pi}{N}.
 \tag{6.3}\label{eq:cell-distance}
\]
When the smaller distance is at most $\pi/4$, the two distances are
comparable and both are at most $3\pi/4$, with an absolute comparison
constant.  When it is larger than $\pi/4$, both corresponding sines are
bounded below by an absolute constant. In either case,
\[
 (1-\rho)^2+4\rho\sin^2\frac{\theta-\psi}{2}
 \asymp
 (1-\rho)^2+4\rho\sin^2\frac{\vartheta-\psi}{2}.
\]
Since the two sides are the squared denominators of $P_a$, this proves
uniform comparability for each kernel. Summing with nonnegative
coefficients proves \eqref{eq:poisson-cell-harnack}; $P_0\equiv1$ causes
no change. Raising to the power $\beta$ and using $m(I)=1/N$ proves
\eqref{eq:poisson-cell-average}.
\end{proof}

\begin{lemma}[Dirichlet--Poisson micro-arcs]
\label{lem:micro-arcs}
Let $M,N\ge2$, put $\rho=1-M^{-1}$ and $s=\rho^{1/N}$.  There is an
absolute $c_0>0$ such that, for every $N$th root of unity $\omega$, the
arc
\[
 J_\omega=\{e^{i\theta}:|\theta-\arg\omega|_{\T}\le c_0/(MN)\}
\]
satisfies
\[
 |D_M(e^{iN\theta})|\ge cM,
 \qquad P_{s\omega}(e^{i\theta})\ge cMN.
 \tag{6.4}\label{eq:microarc-lower}
\]
The arcs $J_\omega$ are pairwise disjoint.
\end{lemma}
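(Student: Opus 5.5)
The plan is to verify the two lower bounds in \eqref{eq:microarc-lower} by direct computation near each $N$th root of unity, and then to choose $c_0$ small enough that the arcs separate. Fix $\omega=e^{i\psi}$ with $\omega^N=1$ and write $\theta=\psi+\tau$ with $|\tau|\le c_0/(MN)$.

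\emph{Dirichlet factor.} Since $N\psi\in2\pi\mathbb Z$, we have $e^{iN\theta}=e^{iN\tau}$ with $|N\tau|\le c_0/M$. Then
\[
 |D_M(e^{iN\theta})|\ge\operatorname{Re}\sum_{k=0}^{M-1}e^{ikN\tau}
 =\sum_{k=0}^{M-1}\cos(kN\tau).
\]
Each angle satisfies $|kN\tau|\le c_0$, so with $c_0\le1$ every cosine is at least $\cos1>1/2$. This gives $|D_M(e^{iN\theta})|\ge M/2$.

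\emph{Poisson factor.} Let $a=s\omega$, so that $1-|a|^2=1-s^2\ge1-s$. Since $s=(1-1/M)^{1/N}$, the mean value theorem (or $1-x^{1/N}\ge(1-x)/N$ for $x\in[0,1]$) gives $1-s\ge1/(MN)$. For an upper bound, $\log s=N^{-1}\log(1-1/M)\ge-(2\log 2)/(MN)$ because $M\ge2$. Hence $1-s\le 2\log2/(MN)<2/(MN)$, so $1-s\asymp1/(MN)$.

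The denominator satisfies
\[
 |1-\overline a e^{i\theta}|^2=(1-s)^2+4s\sin^2(\tau/2)\le(1-s)^2+\tau^2.
\]
On $J_\omega$ this is at most $(4+c_0^2)/(MN)^2$. Therefore
\[
 P_{s\omega}(e^{i\theta})\ge\frac{1/(MN)}{(4+c_0^2)/(MN)^2}\ge\frac{MN}{5}
\]
for $c_0\le1$.

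\emph{Disjointness.} Distinct $N$th roots of unity are at circular angular distance at least $2\pi/N$. Each arc has radius $c_0/(MN)\le c_0/(2N)$, since $M\ge2$. With $c_0\le1$ we have $2\cdot c_0/(2N)<2\pi/N$, so the arcs are pairwise disjoint.

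Taking $c_0=1$ and $c=1/5$ gives both bounds with absolute constants. The only step needing care is the two-sided estimate $1-s\asymp1/(MN)$, which fixes the scale of the Poisson kernel. It is handled uniformly in $M,N\ge2$ by the logarithm bounds above.
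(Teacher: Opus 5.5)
Your proof is correct and follows the same route as the paper: a direct computation near each root, the two-sided scale $1-s\asymp(MN)^{-1}$ inserted into the Poisson denominator $(1-s)^2+4s\sin^2(\tau/2)$, and the $2\pi/N$ separation of the roots for disjointness. The differences are only cosmetic. You bound $|D_M|$ from below by the real part $\sum_k\cos(kN\tau)$ instead of the closed form $|\sin(Mx/2)/\sin(x/2)|$, and you use concavity and logarithm bounds in place of the mean value theorem; this also yields the explicit constants $c_0=1$ and $c=1/5$.
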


\begin{proof}
If $\delta=\theta-\arg\omega$ is chosen in $[-\pi,\pi]$, then
$|N\delta|\le c_0/M$.  Taking $c_0$ sufficiently small and using
\[
 |D_M(e^{ix})|=\left|\frac{\sin(Mx/2)}{\sin(x/2)}\right|
\]
gives the first inequality in \eqref{eq:microarc-lower}.  Moreover,
the mean value theorem applied to $x^{1/N}$ on $[1/2,1]$ gives
\[
 1-s=1-(1-M^{-1})^{1/N}\asymp(MN)^{-1}.
 \tag{6.5}\label{eq:s-radius}
\]
Thus, on $J_\omega$,
\[
 |1-se^{i\delta}|^2
 =(1-s)^2+4s\sin^2(\delta/2)\lesssim(MN)^{-2},
 \qquad 1-s^2\asymp(MN)^{-1},
\]
which proves the Poisson lower bound. Finally, distinct roots have
angular separation $2\pi/N$, whereas each $J_\omega$ has length
$2c_0/(MN)<2\pi/N$.
\end{proof}

\begin{proposition}[Sharp composite Dirichlet capture]
\label{prop:capture}
For $M,N\ge2$, $K\ge1$, and
$X(z)=dM^{-1/2}z^LD_M(z^N)$,
\[
 \sup_{h\in\mathcal A_K}\|Xh\|_2
 \asymp_p d\left[1+M^{1/t}\min\{1,(K/N)^{1/p}\}\right].
 \tag{6.6}
\]
Also $\|M_X:H^t\to H^2\|=\|X\|_p\asymp_pdM^{1/t}$.
\end{proposition}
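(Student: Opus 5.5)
The plan is to reduce everything to one weighted integral. For $h\in\mathcal A_K$ with $|h|^t=Q=\sum_\nu c_\nu P_{a_\nu}$, we have $|h|^2=Q^{\alpha}$ with $\alpha=2/t$. Since $|z^L|=1$ on $\T$,
\[
 \|Xh\|_2^2=\frac{d^2}{M}\int_\T|D_M(e^{iN\theta})|^2\,Q^{\alpha}\,dm .
\]
Using $1-\alpha=2/p$, the claim becomes a two-sided estimate of this integral by $M\bigl[1+M^{\alpha}\min\{1,K/N\}^{2/p}\bigr]$.

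\emph{The multiplier norm.} H\"older with $1/2=1/p+1/t$ gives $\|Xh\|_2\le\|X\|_p\|h\|_t$. Equality is attained by the outer $h$ with $|h|^t=|X|^p/\|X\|_p^p$. The function $|X|^p$ is a trigonometric polynomial modulus, so $\log|X|\in L^1$ and this $h$ exists. Next, $\int_\T|D_M(e^{iN\theta})|^p\,dm=\int_\T|D_M|^p\,dm\asymp_p M^{p-1}$, because $p>1$ and the Dirichlet kernel satisfies $|D_M(e^{ix})|\lesssim\min\{M,|x|^{-1}\}$. Hence $\|X\|_p\asymp_p dM^{-1/2}M^{1/p'}=dM^{1/t}$.

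\emph{Lower bound.} The choice $a=0$ gives $h=1\in\mathcal A_1\subset\mathcal A_K$, and $\|X\|_2=d$. For the second term, put $K'=\min\{K,N\}$ and $s=(1-M^{-1})^{1/N}$. Choose $K'$ distinct $N$th roots of unity $\omega$ and set $Q=\frac1{K'}\sum_\omega P_{s\omega}$. On the disjoint arcs $J_\omega$ of Lemma~\ref{lem:micro-arcs}, we have $Q\ge cMN/K'$, $|D_M(e^{iN\theta})|^2\ge cM^2$, and $m(J_\omega)\asymp(MN)^{-1}$. Summing over the $K'$ arcs gives
\[
 \|Xh\|_2^2\gtrsim\frac{d^2}{M}\,K'\,M^2\Bigl(\frac{MN}{K'}\Bigr)^{\alpha}\frac1{MN}
 =d^2M^{\alpha}\Bigl(\frac{K'}{N}\Bigr)^{1-\alpha}.
\]
Taking square roots gives $dM^{1/t}(K'/N)^{1/p}$. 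Averaging the two lower bounds proves the $\gtrsim$ direction.

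\emph{Upper bound.} If $K\ge N$, the upper bound is just the multiplier norm. So assume $K<N$, and use the partition into $N$ arcs of length $1/N$ from Lemma~\ref{lem:poisson-cell}. Each such arc is exactly one period of $\theta\mapsto D_M(e^{iN\theta})$. The argument then splits the circle into two parts.
\begin{itemize}
 \item \emph{Unmarked cells.} Apply \eqref{eq:poisson-cell-average} with $\beta=\alpha$. Together with $\int_I|D_M(e^{iN\theta})|^2\,dm=M/N$, this gives a contribution of at most $CM\sum_I\int_IQ^{\alpha}\le CM\,\|Q\|_1^{\alpha}=CM$, by Jensen. After multiplying by $d^2/M$, this produces the term $d^2$.
 \item \emph{Marked cells.} At most $3K$ cells are marked; call their union $E$. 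H\"older with exponents $r=p/2$ and $r'=1/\alpha$ gives
 \[
  \int_E|D_M(e^{iN\theta})|^2Q^{\alpha}\,dm\le\Bigl(\int_E|D_M(e^{iN\theta})|^p\,dm\Bigr)^{2/p}\|Q\|_1^{\alpha}.
 \]
 Each cell contributes $\frac1N\int_\T|D_M|^p\,dm\asymp M^{p-1}/N$ to the inner integral. Hence the right side is $\lesssim(KM^{p-1}/N)^{2/p}$, and after multiplying by $d^2/M$ we get $d^2M^{\alpha}(K/N)^{2/p}$.
\end{itemize}
Adding the two contributions and taking square roots gives the $\lesssim$ direction.

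The main obstacle is the unmarked-cell step. One must check that the Harnack comparison of Lemma~\ref{lem:poisson-cell} is applied to the full mixture $Q$ on cells that are unmarked for every atom simultaneously, which the lemma permits. One must also check that the cells are aligned with the periods of $D_M(z^N)$, so that the exact identity $\int_I|D_M(e^{iN\theta})|^2\,dm=M/N$ holds; rotating the partition is harmless because it only shifts which cells are marked.
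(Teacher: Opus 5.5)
Your proposal is correct and follows the paper's proof essentially step for step. The paper also uses H\"older plus the extremal outer function for the multiplier norm, the constant test together with the micro-arc mixture $\frac1{K'}\sum_\omega P_{s\omega}$ from Lemma~\ref{lem:micro-arcs} for the lower bound, and the marked/unmarked period-cell split with Lemma~\ref{lem:poisson-cell} for the upper bound. The only differences are cosmetic: you apply H\"older once on the whole marked union $E$ rather than cellwise followed by concavity in $q_\ell$, and you dispose of $K\ge N$ directly via the multiplier norm rather than through the count $\min(N,3K)$ of marked cells.
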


\begin{proof}
The multiplier identity follows from H\"older and the outer unit vector
with $|h|^t=|X|^p/\|X\|_p^p$; the standard estimate
$\|D_M\|_p\asymp_pM^{1-1/p}$ gives its value.

Put $F(\zeta)=|D_M(\zeta^N)|^2$ and partition $\T$ into its $N$ period
cells $I_\ell$. Since $2/(1-\alpha)=p$,
\[
 \left(\int_{I_\ell}F^{1/(1-\alpha)}\,dm\right)^{1-\alpha}
 \asymp_pM^{1+\alpha}N^{-(1-\alpha)}.
 \tag{6.7}
\]
For $Q=\sum_{\nu=1}^Kc_\nu P_{a_\nu}$, mark the cell containing the
angular direction of each $a_\nu$ and its two neighbors. At most
$\min(N,3K)$ cells are marked. H\"older and concavity, with
$q_\ell=\int_{I_\ell}Q\,dm$, give on their union $E$
\[
 \int_EFQ^\alpha dm\lesssim_p
 M^{1+\alpha}\left(\frac{\min(K,N)}N\right)^{1-\alpha}.
 \tag{6.8}\label{eq:capture-marked}
\]
On an unmarked cell, Lemma~\ref{lem:poisson-cell} gives
$\sup_{I_\ell}Q^\alpha\lesssim N\int_{I_\ell}Q^\alpha dm$.
Since $\int_{I_\ell}F\,dm=M/N$ and $\int Q^\alpha\,dm\le1$,
\[
 \int_{\T\setminus E}FQ^\alpha dm\lesssim_pM.
 \tag{6.9}\label{eq:capture-unmarked}
\]
Equations \eqref{eq:capture-marked}--\eqref{eq:capture-unmarked},
multiplied by $d^2/M$, prove the
upper estimate after taking square roots.

The constant test gives $\|X\|_2=d$. If $K\le N$, put
$\rho=1-1/M$, $s_0=\rho^{1/N}$, select $K$ distinct $N$th roots
$\omega_\nu$, and set $Q_K=K^{-1}\sum_{\nu=1}^KP_{s_0\omega_\nu}$.
Lemma~\ref{lem:micro-arcs} supplies disjoint arcs of normalized length
comparable to $(MN)^{-1}$ around those roots on which
$F\gtrsim M^2$ and $Q_K\gtrsim MN/K$. Thus
\[
 \int FQ_K^\alpha dm\gtrsim_p
 M^{1+\alpha}(K/N)^{1-\alpha}.
 \tag{6.10}\label{eq:capture-lower}
\]
This gives the second lower term because $\alpha/2=1/t$ and
$(1-\alpha)/2=1/p$. For $K\ge N$, use all $N$ roots.
\end{proof}

\begin{theorem}[Fixed-level and kernel-test failure]\label{thm:failure}
For every fixed $K_0\ge1$ there is $\phi\in\Hol(\D)$ such that
\[
 \sup_{h\in\mathcal A_{K_0}}
 \left(\sum_j\|X_jh\|_2^t\right)^{1/t}<\infty,
 \tag{6.11}\label{eq:fixed-level-test}
\]
with a uniformly vanishing coordinate tail, while $T_\phi$ is unbounded.
Also $(\|X_j\|_2)_j\in\ell^t$ with a vanishing tail. For $K_0=1$,
the test family is precisely the normalized kernels
\[
 \kappa_{a,t}(z)=
 \frac{(1-|a|^2)^{1/t}}{(1-\overline az)^{2/t}},
 \qquad |\kappa_{a,t}|^t=P_a.
 \tag{6.12}
\]
\end{theorem}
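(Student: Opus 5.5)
We build $\phi$ block by block from the composite Dirichlet polynomials of Proposition~\ref{prop:capture}. Fix $K_0$. Choose a rapidly increasing sequence of block indices $j_k$, and inside the $j_k$th hard block place
\[
 X_{j_k}(z)=d_kM_k^{-1/2}z^{L_k}D_{M_k}(z^{N_k}),
\]
with $L_k=2^{j_k}$ and $N_kM_k<2^{j_k}$, so that the polynomial lies in $I_{j_k}$. All other blocks are set to zero. Then $\Delta_{j_k}\phi=2^{j_k/p'}X_{j_k}$. The coefficients grow at most polynomially in the frequency, so $\phi$ is holomorphic. The parameters will satisfy $N_k\gg K_0$ and $M_k\to\infty$.

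\textbf{Unboundedness.} Theorem~\ref{thm:main} bounds $\|T_\phi\|$ below by $\|\Ccal_{\phi,p}\|$. This in turn is at least the single-coordinate multiplier norm $\|M_{X_{j_k}}:H^t\to H^2\|\asymp_p d_kM_k^{1/t}$ from Proposition~\ref{prop:capture}. We therefore require $d_kM_k^{1/t}\to\infty$.

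\textbf{Fixed-level test and mixed-norm tail.} We bound $\sup_{h\in\mathcal A_{K_0}}\sum_k\|X_{j_k}h\|_2^t$, and the difficulty is the simultaneous sup over $h$. A naive termwise sup via (6.6) gives
\[
 \sum_k d_k^t\bigl[1+M_k^{1/t}(K_0/N_k)^{1/p}\bigr]^t,
\]
and a sum of sups dominates the sup of a sum, so finiteness of this series suffices. We impose two summability conditions. First, $\sum_k d_k^t<\infty$; this also gives $(\|X_j\|_2)_j=(d_k)\in\ell^t$ with vanishing tail. Second, $\sum_k d_k^tM_k(K_0/N_k)^{t/p}<\infty$.

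\textbf{Choice of parameters.} Take $d_k=k^{-2/t}$, so $\sum_k d_k^t<\infty$. Take $M_k=k^{4}$, so $d_kM_k^{1/t}=k^{2/t}\to\infty$. Then take $N_k$ so large that $M_k(K_0/N_k)^{t/p}\le 1$, and finally $j_k$ so that $2^{j_k}>2N_kM_k$ and the $j_k$ increase. With these choices the second series is dominated by $\sum_k d_k^t<\infty$. The tails over $j>N$ are tails of convergent series, so they vanish uniformly in $h\in\mathcal A_{K_0}$, which is \eqref{eq:fixed-level-test} with uniformly vanishing tail.

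\textbf{Kernel identification for $K_0=1$.} The $\mathcal A_1$ functions are outer with $|h|^t=P_a$. By uniqueness of outer functions up to a unimodular constant, each is a unimodular multiple of $\kappa_{a,t}$. Indeed $|\kappa_{a,t}|^t=P_a$ on $\T$, and $\kappa_{a,t}$ is outer because it is a power of a zero-free function that is positive-real-part bounded-type. Unimodular constants do not affect $\|X_jh\|_2$.

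The main obstacle is the upper estimate (6.6) of Proposition~\ref{prop:capture}, in particular the Harnack-cell control on unmarked cells. It is already proved, so here the remaining care is only in the parameter bookkeeping: $N_k$ must grow fast enough relative to $M_k$ while still fitting inside a dyadic block.
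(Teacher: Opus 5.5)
Your proposal is correct and is essentially the paper's proof. It uses the same composite Dirichlet blocks $d_kM_k^{-1/2}z^{2^{j_k}}D_{M_k}(z^{N_k})$ at sparse hard-block positions, the same key condition $N_k\ge K_0M_k^{p/t}$ (so Proposition~\ref{prop:capture} gives $\sup_{h\in\mathcal A_{K_0}}\|X_{j_k}h\|_2\lesssim_p d_k$), a sum of per-block sups for \eqref{eq:fixed-level-test} and its tail, and unboundedness from $\|M_{X_{j_k}}:H^t\to H^2\|\asymp_p d_kM_k^{1/t}\to\infty$; only the explicit parameters differ ($d_k=k^{-2/t}$, $M_k=k^4$ versus $d_m=2^{-m}$), and you should start at $k\ge2$ so that $M_k\ge2$, as Proposition~\ref{prop:capture} requires.
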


\begin{proof}
Choose $d_m=2^{-m}$ and $M_m\uparrow\infty$ so fast that
$d_mM_m^{1/t}\to\infty$. Choose
$N_m\ge K_0M_m^{p/t}$. At separated hard-block positions $J_m$, set
\[
 X_{J_m}(z)=\frac{d_m}{\sqrt{M_m}}z^{2^{J_m}}
 D_{M_m}(z^{N_m}),
 \tag{6.13}
\]
with $(M_m-1)N_m<2^{J_m}$, and put $X_j=0$ otherwise. Increase $J_m$
further so that $\phi=\sum_m2^{J_m/p'}X_{J_m}$ converges locally
uniformly; exponential radial decay on compact subdisks permits this.

Proposition~\ref{prop:capture} gives
$\sup_{h\in\mathcal A_{K_0}}\|X_{J_m}h\|_2\lesssim_pd_m$.
The $\ell^t$ sum proves \eqref{eq:fixed-level-test} and its vanishing tail. Also
$\|X_{J_m}\|_2=d_m$. In contrast,
$\|M_{X_{J_m}}:H^t\to H^2\|\asymp_pd_mM_m^{1/t}\to\infty$.
If $\Ccal_{\phi,p}$ were bounded, all its coordinate multiplier norms
would be uniformly bounded. Thus it, and hence $T_\phi$, is unbounded.
\end{proof}

\section{Intrinsic aggregate complexity}

For any sequence $X=(X_j)$ of analytic polynomials define
\[
 \Ccal_K(X)=\sup_{h\in\mathcal A_K}
 \left(\sum_j\|X_jh\|_2^t\right)^{1/t},
 \quad \Ccal_\infty(X)=\|h\mapsto(X_jh)_j\|.
 \tag{7.1}\label{eq:atomic-norms}
\]
Let $\mathcal P_K$ be the probability densities formed from at most $K$
Poisson kernels. Let $\Lambda$ consist of finite nonnegative
$\lambda$ with $\|\lambda\|_{\ell^r}=1$. For
\eqref{eq:aggregate-def-intro}, omitting
$A_\lambda=0$, put
\[
 \delta_K(\lambda)=\inf_{Q\in\mathcal P_K}\|W_\lambda-Q\|_1,
 \quad
 \gamma_K(\lambda)=\sup_{Q\in\mathcal P_K}
 \int W_\lambda^{1-\alpha}Q^\alpha dm.
 \tag{7.2}
\]

\begin{theorem}[Exact aggregate-density formula]\label{thm:aggregate}
For every $K\ge1$,
\[
 \boxed{\Ccal_K(X)^2=\sup_{\lambda\in\Lambda}
 A_\lambda\gamma_K(\lambda),\qquad
 \Ccal_\infty(X)^2=\sup_{\lambda\in\Lambda}A_\lambda.}
 \tag{7.3}\label{eq:aggregate-exact}
\]
There is $c_p>0$ such that
\[
 c_p\delta_K(\lambda)^2\le1-\gamma_K(\lambda)
 \le\delta_K(\lambda)^\alpha.
 \tag{7.4}\label{eq:affinity-distance}
\]
Consequently $\Ccal_K(X)\uparrow\Ccal_\infty(X)$, including an infinite
limit. All statements remain valid for coordinate tails.
\end{theorem}

\begin{proof}
For $y_j=\|X_jh\|_2^2$, the identity $t/2=r'$ gives
\[
 \left(\sum_j\|X_jh\|_2^t\right)^{2/t}
 =\sup_{\lambda\in\Lambda}\sum_j\lambda_j\|X_jh\|_2^2.
 \tag{7.5}
\]
If $|h|^t=Q$, the right side for fixed $\lambda$ is
$\int S_\lambda Q^\alpha dm$. The suprema commute and
$S_\lambda/A_\lambda=W_\lambda^{1-\alpha}$, proving the first formula.
H\"older gives an upper bound $A_\lambda$, with formal optimizer
$Q=W_\lambda$. Lemma~\ref{lem:density} approximates that optimizer and
proves the second formula.

For probability densities $W,Q$,
$|x^\alpha-y^\alpha|\le|x-y|^\alpha$ and H\"older yield
\[
 1-\int W^{1-\alpha}Q^\alpha dm\le\|W-Q\|_1^\alpha.
 \tag{7.6}
\]
On the other hand,
\[
 (1-\alpha)u^2+\alpha v^2-u^{2(1-\alpha)}v^{2\alpha}
 \ge c_\alpha(u-v)^2.
 \tag{7.7}\label{eq:uniform-convexity}
\]
After homogeneity reduces to $u=1$, the quotient by $(1-v)^2$ extends
positively at $0,1,\infty$. Apply \eqref{eq:uniform-convexity} to
$u=\sqrt W$, $v=\sqrt Q$, and use
$\|W-Q\|_1\le2\|\sqrt W-\sqrt Q\|_2$ to obtain the reverse bound in
\eqref{eq:affinity-distance}.

The classes $\mathcal P_K$ are nested. Given
$B<\Ccal_\infty(X)^2$, choose finite $\lambda$ with $A_\lambda>B$.
Lemma~\ref{lem:density} gives $\delta_K(\lambda)\to0$, so
\eqref{eq:aggregate-exact}--\eqref{eq:affinity-distance} imply
$\Ccal_K(X)^2>B$ eventually. Finite
support of $\lambda$ also justifies the identical tail argument.
\end{proof}

For one nonzero polynomial $X$, set
$W_X=|X|^p/\|X\|_p^p$. If $\delta_K(X)$ is its $L^1$ distance to
$\mathcal P_K$ and
$\Gamma_K(X)=(\sup_{h\in\mathcal A_K}\|Xh\|_2/\|X\|_p)^2$, then
\[
 c_p\delta_K(X)^2\le1-\Gamma_K(X)
 \le\delta_K(X)^\alpha.
 \tag{7.8}
\]
Thus single-block atomic complexity is exactly, up to the displayed
accuracy conversion, Poisson-mixture approximation complexity of its
$H^p$ mass density.

\section{A finite-bandwidth atomic budget}

\begin{lemma}[BV Poisson discretization]\label{lem:bv}
If $W$ is a probability density on $\T$ of total variation $V$, then
\[
 \inf_{Q\in\mathcal P_K}\|W-Q\|_1
 \lesssim\min\left\{1,
 \sqrt{\frac{V+1}{K}}\log(eK(V+1))\right\}.
 \tag{8.1}\label{eq:bv-budget}
\]
\end{lemma}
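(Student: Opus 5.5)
The plan is to approximate $W$ in two stages: first smooth it by a Poisson integral, then discretize that integral into $K$ atoms. Since both $W$ and any element of $\mathcal P_K$ are probability densities, the trivial bound $\|W-Q\|_1\le 2$ covers the regime $K\lesssim V+1$. So assume $K\ge V+1$. Fix $0<\rho<1$, to be chosen later, and write $W_\rho=P_\rho*W=\int_\T P_{\rho\eta}W(\eta)\,dm(\eta)$.

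\textbf{Smoothing error.} The BV estimate for the Poisson approximate identity gives
\[
 \|W-W_\rho\|_1\le\int_\T P_\rho(e^{i\theta})\,\|W(\cdot\,e^{i\theta})-W\|_1\,dm(\theta)\lesssim V\int P_\rho(e^{i\theta})\min(|\theta|,1)\,dm .
\]
This is because translation by $\theta$ moves a BV function by at most $V|\theta|$ in $L^1$, and also by at most $2$. The first moment of the Poisson kernel is $\asymp(1-\rho)\log\frac{e}{1-\rho}$, since the kernel has a heavy $1/\theta^2$ tail. Hence
\[
 \|W-W_\rho\|_1\lesssim V(1-\rho)\log\frac{e}{1-\rho}.
\]
This logarithm is the source of the $\log$ in \eqref{eq:bv-budget}.

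\textbf{Discretization error.} Partition $\T$ into arcs $I_1,\dots,I_K$ chosen so that the $W$-masses and lengths are balanced. I would take roughly $K/2$ arcs of equal length, and then refine arcs carrying large mass, so that each arc has length $\le 2/K$ and $W$-mass $\le 2/K$. A choice of point $\eta_\nu\in I_\nu$ gives $Q=\sum_\nu w_\nu P_{\rho\eta_\nu}$ with $w_\nu=\int_{I_\nu}W\,dm$, which lies in $\mathcal P_K$. Then
\[
 \|W_\rho-Q\|_1\le\sum_\nu\int_{I_\nu}W(\eta)\,\|P_{\rho\eta}-P_{\rho\eta_\nu}\|_1\,dm(\eta).
\]
The elementary bound $\|P_{\rho\eta}-P_{\rho\eta'}\|_1\lesssim\min\{1,|\eta-\eta'|/(1-\rho)\}$ then controls each term.

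\textbf{The main obstacle.} The crude version of this bound gives only $1/(K(1-\rho))$. Optimizing against the smoothing term yields roughly $\sqrt{V/K}\log$ only if the discretization error can be improved to $\sqrt{\cdot}$ form, and that improvement is the key step. I would obtain it by a cancellation or variance argument: choose $\eta_\nu$ as the $W$-barycentric (median) point of $I_\nu$. Expanding $P_{\rho\eta}$ to second order in $\eta-\eta_\nu$, the first-order terms then cancel. The second-order term gives $\lesssim\sum_\nu w_\nu|I_\nu|^2/(1-\rho)^2\lesssim K^{-2}(1-\rho)^{-2}$. Balancing $V(1-\rho)\log$ against $K^{-2}(1-\rho)^{-2}$ gives a rate around $(V/K)^{2/3}$, which is even better than needed. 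If the barycentric expansion fails near concentrations of $W$, I would fall back on a randomized choice: sample $\eta_\nu$ from $W\mathbf 1_{I_\nu}/w_\nu$. The zero-mean fluctuations then give $\mathbb E\|W_\rho-Q\|_1\le(\sum_\nu w_\nu^2\,\mathbb E\|P_{\rho\eta}-P_{\rho\eta_\nu}\|_2^2)^{1/2}$. Using $\|P_{\rho\cdot}\|_2^2\asymp(1-\rho)^{-1}$, this gives about $(K(1-\rho))^{-1/2}$. Balancing that against $V(1-\rho)\log$ with $1-\rho\asymp(V+1)^{-2/3}K^{-1/3}$ is the route I expect to produce the stated $\sqrt{(V+1)/K}\,\log(eK(V+1))$ up to harmless exponents.

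Finally, the minimum with $1$ in \eqref{eq:bv-budget} handles the regime $K<V+1$, via the trivial bound from the first paragraph.
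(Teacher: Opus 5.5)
Your two-stage skeleton is the paper's: Poisson smoothing at $\rho=1-\varepsilon$ with error $\lesssim V\varepsilon\log(e/\varepsilon)$, followed by discretizing $P_\rho*W$ into $\sum_\nu w_\nu P_{\rho\eta_\nu}$ over arcs. What you call the main obstacle, however, is a miscalculation: the crude first-order bound already suffices. With $K$ equal arcs and any $\eta_\nu\in I_\nu$, the discretization error is $\lesssim 1/(K\varepsilon)$; the paper gets this from $\|P_\rho'\|_1\lesssim 1/\varepsilon$. Now take $\varepsilon=[K(V+1)]^{-1/2}$. The smoothing term becomes $\lesssim\sqrt{(V+1)/K}\,\log(eK(V+1))$ and the discretization term becomes $\sqrt{(V+1)/K}$. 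Balancing $a\varepsilon$ against $b/\varepsilon$ gives $\sqrt{ab}$, which is exactly the square-root rate. This is the paper's entire proof, with the trivial bound $2$ when this $\varepsilon$ exceeds $1/2$. No mass balancing of the arcs is needed.

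Your deterministic refinement is correct and in fact stronger, but only if $\eta_\nu=e^{is_\nu}$ with $s_\nu$ the $W$-weighted \emph{mean} angle on $I_\nu$, so that $\int_{I_\nu}W(s)(s-s_\nu)\,ds=0$. The median gives no first-order cancellation. With the mean, Taylor's formula with integral remainder and Minkowski bound the error by $\sum_\nu\frac12 w_\nu|I_\nu|^2\|P_\rho''\|_1\lesssim (K\varepsilon)^{-2}$, since $\|P_\rho''\|_1\asymp\varepsilon^{-2}$. This bound sees only arc lengths, so concentration of $W$ inside an arc does no harm. Choosing $\varepsilon=(V+1)^{-1/3}K^{-2/3}$ then gives $((V+1)/K)^{2/3}\log(eK(V+1))$, which implies \eqref{eq:bv-budget}.

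The randomized fallback, by contrast, would not prove the lemma. Balancing $V\varepsilon\log(e/\varepsilon)$ against $(K\varepsilon)^{-1/2}$ at your choice $\varepsilon\asymp(V+1)^{-2/3}K^{-1/3}$ gives only $((V+1)/K)^{1/3}$ up to logarithms. Dropping the exponent from $1/2$ to $1/3$ is not harmless: it is a strictly weaker bound when $K\gg V+1$. Drop that branch. Either the crude bound (the paper's route) or the mean-barycentre refinement completes the proof; the refinement buys a better exponent at the cost of the second-derivative estimate.
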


\begin{proof}
We give the details, with angles represented in $[-\pi,\pi]$.  Write
$\tau_\theta W(e^{ix})=W(e^{i(x-\theta)})$. The translation estimate for
a function of bounded variation is
\[
 \|\tau_\theta W-W\|_1\le C V|\theta|.
 \tag{8.2}\label{eq:bv-translation}
\]
For smooth $W$ this follows from the fundamental theorem of calculus and
$\|W'\|_1\le CV$; the general case follows by convolution with a smooth
approximate identity, whose variations do not exceed $V$, followed by
$L^1$ convergence. For general Hardy-space and Poisson-kernel background
see \cite{Garnett2007,JevticVukoticArsenovic2016}; the quantitative
estimates needed here are proved explicitly below.

Put $\rho=1-\varepsilon$, where $0<\varepsilon\le1/2$.  Minkowski's
inequality and \eqref{eq:bv-translation} give
\[
 \|P_\rho*W-W\|_1
 \le\int_{-\pi}^{\pi}P_\rho(e^{i\theta})
       \|\tau_\theta W-W\|_1\,dm(\theta)
 \le CV\int_{-\pi}^{\pi}|\theta|P_\rho(e^{i\theta})\,dm(\theta).
 \tag{8.3}\label{eq:bv-poisson-minkowski}
\]
The identity
\[
 P_\rho(e^{i\theta})
 =\frac{1-\rho^2}{(1-\rho)^2+4\rho\sin^2(\theta/2)}
 \le\frac{C\varepsilon}{\varepsilon^2+\theta^2}
 \tag{8.4}\label{eq:poisson-upper}
\]
shows, by direct integration over $[0,\pi]$, that
\[
 \|P_\rho*W-W\|_1
 \le CV\varepsilon\log(e/\varepsilon).
 \tag{8.5}\label{eq:bv-smoothing}
\]

Now partition $\T$ into $K$ equal arcs $E_\nu$. Choose
$\eta_\nu\in E_\nu$, put $c_\nu=\int_{E_\nu}W\,dm$, and set
$Q_{K,\rho}=\sum_\nu c_\nu P_{\rho\eta_\nu}$.  The angular Poisson
kernel is even and decreases on $[0,\pi]$, and therefore
\[
 \|P_\rho'\|_1
 =\frac{P_\rho(1)-P_\rho(-1)}{\pi}\le\frac{C}{\varepsilon}.
 \tag{8.6}\label{eq:poisson-derivative}
\]
Translation and the fundamental theorem of calculus consequently imply
\[
\begin{split}
 \|P_\rho*W-Q_{K,\rho}\|_1
 &\le\sum_{\nu=1}^K\int_{E_\nu}W(\eta)
  \|P_{\rho\eta}-P_{\rho\eta_\nu}\|_1\,dm(\eta)\\
 &\le \frac{C}{K}\|P_\rho'\|_1
 \le\frac{C}{K\varepsilon}.
\end{split}
 \tag{8.7}\label{eq:poisson-discretization}
\]
Take $\varepsilon=[K(V+1)]^{-1/2}$ when this is at most $1/2$.
Combining \eqref{eq:bv-smoothing} and
\eqref{eq:poisson-discretization} yields the second term in
\eqref{eq:bv-budget}.  When this choice exceeds $1/2$, use
$\|W-Q\|_1\le2$ for probability densities and enlarge the absolute
constant.
\end{proof}

\begin{theorem}[Finite-bandwidth approximation]\label{thm:bandwidth}
Let $X_1,\ldots,X_m$ be a finite nonzero analytic polynomial family and
assume each $|X_j|^2$ has trigonometric degree at most $D$. If
$\Ccal_K,\Ccal_\infty$ are defined by \eqref{eq:atomic-norms} and
$\Ccal_\infty>0$, then \eqref{eq:bandwidth-intro} holds. Hence, up to the
displayed logarithmic factor, $K\gtrsim_p(D+1)\eta^{-2p/(p-2)}$ captures the
squared norm with relative error at most $\eta$.
\end{theorem}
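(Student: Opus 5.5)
The proof goes through the exact aggregate formula of Theorem~\ref{thm:aggregate} combined with the BV discretization of Lemma~\ref{lem:bv}. Given $\Ccal_\infty>0$ and $\theta<1$, pick a finite nonnegative $\ell^r$-unit $\lambda$ with $A_\lambda\ge\theta\,\Ccal_\infty^2$. Such a $\lambda$ exists by \eqref{eq:aggregate-exact}, and it may be supported on $\{1,\ldots,m\}$ because the family is finite. Then
\[
 \Ccal_K^2\ge A_\lambda\gamma_K(\lambda)\ge A_\lambda\bigl(1-\delta_K(\lambda)^\alpha\bigr)
\]
by \eqref{eq:affinity-distance}. So it suffices to bound $\delta_K(\lambda)$ uniformly in $\lambda$ by the quantity in \eqref{eq:bv-budget} with $V\lesssim_p D+1$. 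Letting $\theta\uparrow1$ then gives \eqref{eq:bandwidth-intro}, with the exponent $\alpha$ coming from the upper affinity bound.

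The core step is a variation bound for $W_\lambda=S_\lambda^r/A_\lambda^r$. Here $S_\lambda=\sum_j\lambda_j|X_j|^2$ is a nonnegative trigonometric polynomial of degree at most $D$. The total variation of $W_\lambda$ is $A_\lambda^{-r}\int_\T |(S_\lambda^r)'|\,dm$ up to a $2\pi$ factor, and $(S_\lambda^r)'=rS_\lambda^{r-1}S_\lambda'$. I would apply H\"older with exponents $r/(r-1)$ and $r$:
\[
 \int S_\lambda^{r-1}|S_\lambda'|\,dm\le\|S_\lambda\|_r^{r-1}\|S_\lambda'\|_r .
\]
Bernstein's inequality in $L^r$ for trigonometric polynomials of degree $D$ gives $\|S_\lambda'\|_r\le D\|S_\lambda\|_r$. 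Hence $V(W_\lambda)\lesssim rD=\tfrac p2 D$, uniformly in $\lambda$ and in the family. Lemma~\ref{lem:bv} then gives
\[
 \delta_K(\lambda)\lesssim\min\Bigl\{1,\sqrt{\tfrac{pD+1}{K}}\log\bigl(eK(pD+1)\bigr)\Bigr\}.
\]
The constant $p$ can be absorbed into $\lesssim_p$, since $\sqrt{pD+1}\le\sqrt p\sqrt{D+1}$ and the logarithm changes by at most a $p$-dependent factor. Raising to the power $\alpha$ yields \eqref{eq:bandwidth-intro}. The case where the minimum equals $1$ is trivial because $\Ccal_K\ge0$.

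For the final sentence, require the right side of \eqref{eq:bandwidth-intro} to be at most $\eta$. This means $\sqrt{(D+1)/K}\,\log(\cdot)\lesssim\eta^{1/\alpha}$. Ignoring the logarithm, it amounts to $K\gtrsim(D+1)\eta^{-2/\alpha}=(D+1)\eta^{-2p/(p-2)}$, since $2/\alpha=t=2p/(p-2)$.

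The only delicate point is the variation estimate. $W_\lambda$ involves the possibly non-integer power $r$ of a polynomial that may vanish. Differentiating $S_\lambda^r$ is still legitimate because $r>1$ makes $x\mapsto x^r$ $C^1$ on $[0,\infty)$, so $S_\lambda^r$ is absolutely continuous. The constants must also be independent of $m$ and $\lambda$, which the H\"older–Bernstein route provides.
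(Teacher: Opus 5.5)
Your proposal is correct and follows the paper's proof. Both arguments feed the H\"older--Bernstein variation bound $\|W_\lambda'\|_1\le rD$ into Lemma~\ref{lem:bv}, and both then use $\Ccal_K^2\ge A_\lambda\bigl(1-\delta_K(\lambda)^\alpha\bigr)$ from Theorem~\ref{thm:aggregate}. The only difference is that the paper takes an exact maximizer $\lambda$, which exists by compactness of the finite-dimensional unit sphere, whereas you use near-maximizers with $\theta\uparrow1$; this works equally well because your bound on $\delta_K(\lambda)$ is uniform in $\lambda$.
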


\begin{proof}
The finite-dimensional supremum
$\Ccal_\infty^2=\sup_{\lambda\in\Lambda}A_\lambda$ is attained. For a
maximizer, $S_\lambda$ is a nonnegative trigonometric polynomial of degree
at most $D$. Bernstein's inequality and H\"older give
\[
 \|W_\lambda'\|_1
 \le r\frac{\|S_\lambda\|_r^{r-1}\|S_\lambda'\|_r}
 {\|S_\lambda\|_r^r}\le rD.
\]
Lemma~\ref{lem:bv} bounds $\delta_K(\lambda)$, and
Theorem~\ref{thm:aggregate} gives
$\Ccal_K^2\ge\Ccal_\infty^2[1-\delta_K(\lambda)^\alpha]$.
\end{proof}

\section{Consequences and scope}

Theorem~\ref{thm:main} is a necessary-and-sufficient condition for every
analytic symbol at every fixed $2<p<\infty$, including essential norms
and compactness. Taking $h=1$ recovers the $H^p\to H^2$ condition. For a
one-frequency-per-block symbol the constant test is norming, recovering
the lacunary result. H\"older also shows that
$(\|X_j\|_p)_j\in\ell^t$ is sufficient, recovering the known lower
inclusion.

Theorem~\ref{thm:resolution} removes dependence on the hard cutoff:
boundedness, tail norms, compactness, and essential norms are unchanged,
up to structural constants, under every admissible smooth analytic
resolution. Theorem~\ref{thm:calderon} then identifies the resulting
resolution-independent analytic Calder\'on space with the Guo--Tang
multiplier space and identifies its little space by polynomial closure,
analytic $c_0(H^p)$ factorization, and radial norm continuity.

Theorem~\ref{thm:dense-separation} shows that this sufficient inclusion is
strict in a robust way: the new bounded symbol has consecutive active
blocks, occupies half of every large hard block, and is not supported on
an analytic $\Lambda(p)$-set.

Theorems~\ref{thm:atomic} and \ref{thm:failure} show, respectively, that
all finite positive Poisson mixtures are complete and that no fixed
finite level is complete. Proposition~\ref{prop:capture} locates the sharp
complexity scale for a natural multi-peak model. Theorem
\ref{thm:aggregate} replaces model-specific peak counts by intrinsic
near-norming aggregate densities, and Theorem~\ref{thm:bandwidth} makes
that reduction quantitative for finite angular bandwidth.

The criterion and $\mathfrak C_p$ are explicit analytic vector/product
descriptions, not an identification with a previously named scalar Besov,
Lipschitz, tent, or mixed-norm space. Eliminating the full family of outer
tests by a comparably classical scalar norm remains a separate
scalarization problem.
All arguments are analytic. Targeted searches did not find the exact
package in the direct literature; MathSciNet, zbMATH, the formula-level
chain of the coefficient-multiplier literature, unpublished manuscripts,
and later revisions remain to be checked. No global first-proof claim is
made.

\end{document}